\theoremstyle{plain} 
\newtheorem{theorem}{Theorem}
\newtheorem{corollary}[theorem]{Corollary}
\newtheorem{lemma}[theorem]{Lemma}
\newtheorem{proposition}[theorem]{Proposition}
\theoremstyle{definition}
\newtheorem{definition}{Definition}
\theoremstyle{remark}
\newtheorem{remark}{Remark}
\newcommand{\ett}{\mathsf{1}}
\newcommand{\sgn}{{\mbox{sgn}}}
\newcommand{\qr}[1]{\eqref{#1}}
\newcommand{\RR}{{\mathbb{R}}}
\newcommand{\ZZ}{{\mathbb{Z}}}
\newcommand{\NN}{{\mathbb{N}}}
\newcommand{\CB}{\mathcal{B}}
\newcommand{\CF}{\mathcal{F}}
\newcommand{\CM}{\mathcal{M}}
\def\C{\mathcal{C}}
\def\CK{\mathcal{K}}
\def\F{\mathit{F}}
\def\M{\mathit{M}}
\def\E{\mathsf{E}}
\def\P{\mathsf{P}}
\def\T{\mathit{T}}
\def\L{\mathit{L}}
\def\Ls{{\L^{\!*}}}
\newcommand{\var}{\operatorname{var}}
\let\tl=\tilde
\let\X=X
\let\x=x
\newcommand{\ii}[1]{^{(#1)}}
\def\Ordo#1{O\left(#1\right)}
\def\dtv{d_{\mathrm{TV}}}
\begin{document}

\title{Doeblin measures\\ uniqueness and mixing properties} \author{Noam Berger,
  Diana Conache, Anders Johansson, and Anders \"Oberg}

\address{Noam Berger and Diana Conache: Technische Universit\"at M\"unchen,
  School of Computation, Information and Technology, Boltzmannstrasse 3 85748
  Garching bei M\"unchen, Germany.}
  \email{noam.berger@tum.de}
  \email{diana.conache@tum.de}
  \address{Anders Johansson, Department of Mathematics,
  University of G\"avle, 801 76 G\"avle, Sweden.}
  \email{ajj@hig.se}
\address{Anders \"Oberg, Department of Mathematics, Uppsala University, P.O.\
  Box 480, 751 06 Uppsala, Sweden.}
  \email{anders@math.uu.se}

\date{} \keywords{Doeblin measure, ergodic theory, $g$-measure, chains
  with complete connections, transfer operator, mixing, phase transition}
\subjclass[2020]{Primary 37A05, 37A25, 37A50, 60G10}

\begin{abstract}
  In this paper we solve two open problems in ergodic theory. We prove first
  that if a Doeblin function $g$ (a $g$-function) satisfies
  \[\limsup_{n\to\infty}\frac{\var_n \log g}{n^{-1/2}} < 2,\]
  then we have a unique Doeblin measure ($g$-measure). This result indicates
  a possible phase transition in analogy with the long-range Ising model.

  Secondly, we provide an example of a Doeblin function with a unique Doeblin
  measure that is not weakly mixing, which implies that the sequence of iterates
  of the transfer operator does not converge, solving a well-known folklore problem in ergodic theory.
  Previously it was only known that uniqueness does not imply the Bernoulli property.
\end{abstract}

\maketitle

\section{Introduction}\label{s.intro}\noindent

In this paper we provide, in terms of the variations of a $g$-function, the so
far best known (and possibly sharp) uniqueness condition for what is usually
called a $g$-measure. Keane introduced the terminology of $g$-functions and
$g$-measures in a groundbreaking paper (1972,~\cite{keane}) and this terminology
became standard after further important papers by Ledrappier (1974,~\cite{led})
and Walters (1975,~\cite{walters1}). These measures had already appeared in the
pioneering probability theory work by Doeblin and Fortet (1937,~\cite{doeblin}),
where they proved uniqueness of stationary distributions for what they called 
chains of complete connections. This is possibly the first time coupling 
was used in probability theory.

We propose to change the name of $g$-measures to Doeblin measures, and also $g$-functions to
Doeblin functions. We think the name $g$-measure is rather confusing, since it just refers to some function
$g$. Also, many authors have used other names than $g$-measures, such as (stationary
distributions of) stochastic chains with unbounded memory (\cite{gallesco}).

Moreover, we would also like to pay tribute to Wolfgang Doeblin, a French
Mathematician of German-Jewish origin, who died early in the Second World War,
leaving some great contributions to probability theory. We refer to Lindvall's
biographical paper~\cite{lin2} for more on Doeblin's life and work. 

In Section 2, we show that we have a unique Doeblin measure if the Doeblin function
$g>0$ satisfies
\begin{equation}\label{eq:assumption}
  \limsup_{n\to\infty}\frac{\var_n \log g}{{n}^{-1/2}} < 2.
\end{equation}
We were
motivated by the earlier result by Johansson, \"Oberg and
Pollicott~\cite{johob3}, where they proved that uniqueness follows if
\begin{equation*}
  \var_n \log g=o(n^{-1/2}),
\end{equation*}
as $n\to \infty$.

We conjecture that the condition~\eqref{eq:assumption} is sharp, i.e.\ that there are
systems where a phase transition occurs in analogy with the Ising models with
long-range interactions (see e.g.~\cite{aizenman}). The best known bound in this
direction is that provided in Berger, Hoffman and Sidoravicius~\cite{berger}
showing that the power $-1/2$ in the denominator of~\eqref{eq:assumption} is sharp. To
show that there exists a Doeblin function $g$ possessing multiple Doeblin
measures for which the left hand side of~\eqref{eq:assumption} is finite remains
however an open challenge.

In Section 3 we treat the well-known problem whether a unique Doeblin measure is
mixing, in the sense that uniqueness of a Doeblin measures implies that the
sequence of the iterates of the transfer operator converges. We answer this
negatively by means of a counterexample, and this also provides an example of a
unique Doeblin measure for which even weak mixing fails. Kalikow~\cite{kalikow}
gave an example of a system that is $K$ (implying uniqueness of a Doeblin
measure) but not Bernoulli, that is, that the system is (not) measurably
isomorphic to a Bernoulli shift.

For our counterexample in Section 3 we use the result of \cite{berger}, 
which gives examples of functions $g>0$ possessing
multiple Doeblin measures. We have supplied this paper with an appendix in order to clarify that
the construction in~\cite{berger} gives \emph{precisely} two extremal (ergodic)
measures, since this is important for our construction. This is not known for the construction of
more than one Doeblin measure in the paper by Bramson and Kalikow \cite{bramson}.

\noindent {\bf Acknowledgements}. We would like to thank Mark Pollicott and Peter Walters for valuable discussions. 
The fourth author wishes to thank the Knut and Alice Wallenberg Foundation for financial support.
\newpage
\subsection{Preliminaries}\label{sec:prel}

\subsubsection{Notation and basic definitions}
Let $(X,T)$ be a topological space and let $\CB$ be the corresponding Borel $\sigma$-algebra. 
We let $\C(X)$ denote the space of continuous
functions on $\X$ and $\CM(X)$ denote the space of probability measures on $X$.
We write $\delta_x\in\CM$ for the Dirac measure at $x\in\X$. Assuming the variable $x$
for values in $X$ and given a measure $\mu\in\CM(X)$, we write $x\sim\mu$ to (somewhat
informally) state that a variable $x$ has (or is sampled with) distribution $\mu$.
The expression $\mu(f(x))$ refers to the distribution $\mu \circ f^{-1}$ of the function
$f(x)$ defined on $X$. Similarly, the expression $\mu( f(x) | g(x))$ signifies the
\emph{conditional distribution} of $f(x)$ with respect to the sigma-algebra
generated by $g$.

We always assume the existence of an underlying probability space $\Omega$ with probability measure
$A\mapsto \P(A)$ and expectation operator $f\mapsto \E(f)$. We write discrete time
stochastic processes defined on $\Omega$ as $u\ii n$, i.e.\ with the time index
$n\in\ZZ$ as a superscript in parentheses.

\subsubsection{The state space $\X$}
Let $A$ be a finite set and let $\X=A^\NN$ be the space of one-way infinite
strings written left-to right as $x=x_0x_1\dots$. We endow $\X$ with the product
topology and the corresponding Borel sigma-algebra $\CB$. We consider the
symbolic shift system $(X,\T)$ where $\T:\X\to\X$, $x_0x_1\dots \mapsto \T\x = x_1x_2\cdots$,
is the destructive left shift on $\X$. The fiber at $x$ is thus the set
\(\T^{-1}x=\{ax:a\in A\}\), where $ax$ denotes the string $x$ with prefix $a\in A$
prepended.

We will occasionally use the following \emph{natural extension} $(\bar X,\bar T)$ of
$(\X,\T)$ with projection $\pi:\bar\X\to\X$ such that $\pi\circ \bar\T = \T\circ\pi$. We take
$\bar\X=A^\ZZ$ with the projection $\pi:\bar\X\to\X$ given by
\[
  \bar x= {(\bar\x_i)}_{i\in\ZZ} \mapsto \pi(\bar x) = \bar x_0 \bar x_{-1} \bar x_{-2} \cdots
\]
and the non-destructive right shift $\bar\T:\bar\X\to\bar\X$ defined by
${(\bar\T\bar x)}_i=\bar x_{i-1}$.

For $x\in\X$, let $\iota_n(x)\in A^n$ denote the initial prefix of $x$ of length $n$,
i.e.\ \(\iota_n(x)=x_0x_1\dots\x_n\) and let \({[x]}_n = \iota_n^{-1}(\iota_n(x))\) be the
initial cylinder set of length $n$. Denote by $\CF_n$ the sigma-algebra
generated by the initial cylinders. For $x,y \in \X$, we denote by $\kappa(x,y)\ge0$ the
length of initial \emph{agreement}, i.e.\
\begin{equation}\label{defkappa}
  \kappa(x,y) = \sup \{n\ge0 : \iota_{n}(x) = \iota_{n}(y)\}.
\end{equation}
The \emph{variation of order $n$}, $\var_n f$, $n\ge0$, of a function $f:\X\to\RR$
is
\[\var_n f = \sup_{x,y}\{|f(x)-f(y)| : \kappa(x,y) \ge n\}. \]
A function $f(x)$ is \emph{continuous} with respect to the product topology
precisely if $\var_n f\to0$ as $n\to\infty$. 

\subsubsection{Doeblin chains and Doeblin measures}
A chain with complete (or infinite) connections is a generalization of Markov
chains to having a continuous dependence on an infinite past, instead of a
finite past. Our study focuses on discrete time stochastic processes
$x\ii{n}\in\X$ with state space $\X$, such that, for all times $n$, the previous
state $x\ii{n-1}$ is the shift $\T x\ii n$ of the current state $\x\ii{n}$.
Hence, the process is deterministic in the direction of the past and has a
stochastic \emph{transition} in the direction of the future: Schematically,
\begin{equation}\label{Umarkov}
  \x\ii{n-1} \overset{\ \T}\longleftarrow x\ii{n} \; \rightsquigarrow\;  x\ii{n+1}
\end{equation}
where $x\ii {n+1} = a x\ii {n}$ is obtained by prepending a random prefix $a\in A$
to $x\ii n$. The process $x\ii n$ trivially has the Markov property since the
current state determines all past states: We can capture the full realisation
$\{x\ii n\}$ from an element $\bar x\in \bar\X$ by
\[
  x\ii{n}(\bar\x) = \pi(\bar\T^{-n}\,\bar\x) = \bar x_n \bar x_{n-1} \cdots.
\]
At each transition $\x\ii n\rightsquigarrow x\ii{n+1}$, we thus \emph{expose} the
coordinate $\bar\x_{n+1}$ of $\bar x$.

We also require that, given the current state $x\ii n$, the transition should sample
the next state $x\ii{n+1}$ in a time invariant manner and have a \emph{continuous}
dependence on the past. This means that
\begin{equation}\label{g-transition}
  \P\left(x\ii{n+1}=ax \mid\x\ii {n} =\x\right) = g(ax)
\end{equation}
where $g: \X \to [0,1]$ is a continuous function such that
\begin{equation}\label{doeblin-function}
  \sum_{a\in A} g(ax) = 1\quad\text{ for all }\x\in\X.
\end{equation}
\begin{definition}
We say that $g\in\C(\X)$ satisfying~\eqref{doeblin-function} is a \emph{Doeblin
  function} (a.k.a.\ a $g$-function) and if $g>0$ we say that $g$ is
\emph{regular}. For a given Doeblin function $g(x)$, we refer to processes
$\{ x\ii{n} \}$ satisfying~\eqref{g-transition} as \emph{$g$-chains}. 
\end{definition}
Note
that~\eqref{Umarkov} is a consequence of~\eqref{g-transition}. Without reference
to a fixed Doeblin function $g$, we speak of \emph{Doeblin chains}. We can
specify a Doeblin chain by specifying the \emph{system} $(\X,\T,g)$ together
with an initial distribution $\nu\in\CM(\X)$ for the state $x\ii0$ at time zero.

An alternative way to describe $g$-chains uses the associated \emph{transfer
  operator}, $\L=\L_g$, which acts on continuous functions $f\in \C(\X)$ according
to
\[
  f(x) \mapsto \L f(x) = \sum_{y\in\T^{-1}(x)} g(y) f(y).
\]
Then $\L$ is a transition operator for the process $\x\ii n$, i.e.
\(\L f(x) = \E\left(f(x\ii {n+1}) \mid\x\ii{n}=x\right)\) for all $n$ and all
functions $f\in\C(\X)$. For the dual operator ${\Ls}$ acting on $\CM(\X)$, it holds
that the conditional distribution
\(\P(x\ii{n+1}\mid\x\ii{n}) = \Ls \, \delta_{x\ii{n}}\).

A measure $\mu\in\CM(\X)$ is the distribution of the $n$:th term in a $g$-chain
governed by~\eqref{g-transition} if and only if $\mu=\Ls^n(\mu\circ\T^{-n})$. We refer
to such $\mu$ as $g$-chains of \emph{age $n$}. Denote by $\CK_g\ii{n} \subset \CM$ the
set of $g$-chains of age $n$ and $\CK_g = \cap_{n=1}^\infty \CK_g\ii{n}$ the space
$\CK_g\subset\CM(\X)$ of $g$-chains of infinite age.

\begin{definition}
A \emph{Doeblin measure} is the invariant distribution of a \emph{stationary}
$g$-chain. 
\end{definition}
Equivalently, a Doeblin measure $\mu\in\CM(\X)$ is a fixed point to the
dual transfer operator $\Ls$, i.e.\ \(\mu=\Ls \mu\). We denote by $\CM_g$ the space
of Doeblin measures corresponding to $g$. A Doeblin measure $\mu$ is translation
invariant, i.e. $\CM_g \subset \CM_\T$, where $\CM_\T:=\{\mu: \mu=\mu\circ\T^{-1}\}$. Note
that a translation invariant measure $\mu$ is a $g$-chain of infinite age and that
$\CM_g = \CM_\T \cap \CK_g\ii 1$.

Our first result, Theorem~\ref{thm:main}, concerns regularity conditions on $g$
ensuring that $\CM_g = \{\mu\}$, i.e.\ conditions implying the uniqueness of the
Doeblin measure. In our second result, Theorem~\ref{thm:counter}, we construct
an example of a system $(\X,\T,\tl g)$, where we have multiple Doeblin chains
but a unique Doeblin measure, i.e.\ $\CK_{\tl g} \supsetneq \CM_{\tl g} = \{\mu\}$.

\subsection{Previous results on uniqueness and non-uniqueness}

We say that $g(x)$ is \emph{local} if $g$ is $\CF_N$-measurable for some $N$,
i.e.\ determined by a finite set of coordinates. For a local Doeblin function
$g(x)$ the corresponding $g$-chains $x\ii{n}$ reduce to finite-state Markov
chains $\iota_N(x\ii{n})$ with state space $A^N$ and $g$ is therefore uniquely ergodic
if we furthermore assume that it is \emph{regular}, i.e.\ that $g > 0$.

For a system $(\X,\T,g)$, the existence of a Doeblin measure follows from the
Schauder-Tychonoff fixed-point theorem, since the compact convex set $\CM_T$ of
translation invariant measures maps to itself under the action of $\Ls$. Note
that the requirement that the Doeblin function $g$ is continuous is essential.
Consider for example the ``discontinuous Doeblin function'' $g(x)$,
$x\in{\{-1,1\}}^\NN$, given by
\[
  g(x) = 0.5 - 0.25\cdot\x_0 \cdot {\bf 1}_{M(x) > 0},
\]
where $M(x)=\limsup_{n\to\infty}\frac 1n \sum_{k=1}^n\x_{k}$. For this $g$ the space
$\CM_g$ is empty.

Doeblin and Fortet~\cite{doeblin} proved that the Doeblin measure is unique
whenever the regular Doeblin function $g$ satisfies ``summability of
variations''
\[
  \sum_{n=1}^\infty \var_n g<\infty.
\]
In~\cite{johob}, Johansson and Öberg showed that the condition of ``square
summability'' (or $\ell^2$-summability)
\begin{equation}\label{square}
  \sum_{n=1}^{\infty} {(\var_n g)}^2<\infty,
\end{equation}
implies uniqueness. In~\cite{johob2} Johansson, \"Oberg and Pollicott
extended this result to countable state shifts.

The condition of square summable variations was further explored by Gallesco,
Gallo and Takahashi~\cite{gallesco}. They also made a distinction between
equilibrium and dynamic uniqueness (similar to the distinction between weak and
strong uniqueness of Gibbs measures of disordered spin systems in rigorous
statistical mechanics~\cite{vanenter1}). Equilibrium uniqueness is what we call
uniqueness of a Doeblin measure, and dynamic uniqueness is a stronger notion.
They show that a.e.\ square summable variations (first introduced
in~\cite{johob}) is equivalent to dynamic uniqueness and also to the weak
Bernoulli property.

Berbee's results from the late 1980s~\cite{berbee, berbee2} are also intriguing
in this context. He proved that there is a unique Doeblin measure if
\begin{equation}\label{berbee}
  \sum_{n=1}^\infty e^{-r_1-\cdots-r_n}=\infty,
\end{equation}
where $r_n=\var_n \log g$ denotes the (logarithmic) variations of the Doeblin
function. This condition for uniqueness is not comparable with that of
\eqref{square}, i.e., there are cases where one is valid but not the other,
although it is worth noting that Berbee's condition is valid for $r_n=1/n$, but
not for $r_n=1/n^{\alpha}$, if $\alpha<1$, whereas~\eqref{square} is valid if $\alpha> 1/2$.
Also, Berbee's condition is not valid if $r_n=c/n$ and if $c>1$.

In~\cite{johob3}, the authors found a condition for uniqueness that subsumes
Berbee's condition~\eqref{berbee} for uniqueness with square summable
variations~\eqref{square}. As a corollary, uniqueness was obtained under the
condition
\[\var_n \log g=o\left(\frac{1}{\sqrt{n}}\right), \quad n\to \infty.\]

Fernandez and Maillard~\cite{fernandez} proved that uniqueness and mixing
properties for a Doeblin measure (and some generalisations) follow if the
Doeblin function satisfies the classical Dobrushin condition, which is
incomparable with variational conditions. In statistical mechanics, Bissacot,
Endo, Le Ny and van Enter~\cite{vanenter3} show that the ``$g$-measure
property'' is violated for certain Gibbs measures in low temperatures. Other papers that draw inspiration
from statistical mecahanics are Verbitskiy~\cite{verb}, Johansson, Öberg and
Pollicott~\cite{johob4}, and the forthcoming~\cite{johob5}. In the last two
papers the difference between conditions for uniqueness of Gibbs measures for
general potentials and the uniqueness of Doeblin measures are further explored
and to some extent clarified.

Concerning non-uniqueness of Doeblin measures, we refer to the breakthrough of
Bramson and Kalikow~\cite{bramson}, who provided the first counterexample to
uniqueness of Doeblin measures consistent with a positive Doeblin function,
although the Doeblin function is continuous. The paper by Berger, Hoffman and
Sidoravicius~\cite{berger} was a direct response to~\cite{johob} and provided
non-unique systems where the variations of the Doeblin functions are in
$\ell^{2+\epsilon}$ for $\epsilon>0$, proving that the square summable variations condition is
sharp for uniqueness in this sense. We would also like to mention the paper of
Hulse~\cite{hulse}, where he demonstrates, using an example, that the Dobrushin
type condition in~\cite{fernandez} cannot be essentially improved.

\subsection{Our results}
We now present the main two results of this paper.
\def\bct{c_1}

\begin{theorem}\label{thm:main}
  Let $g$ be a regular Doeblin function that satisfies \eqref{eq:assumption}, namely
  \begin{equation*}
    \limsup_{n\to\infty}\frac{r_n}{\sqrt{n}^{-1}} < 2, \text{ with } r_n:=\var_n \log g.
  \end{equation*}
  Then $g$ possesses a unique Doeblin measure.
\end{theorem}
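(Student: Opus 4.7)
The plan is to prove uniqueness by a coupling argument, refining the strategy of \cite{johob3}. Let $\mu_1,\mu_2\in\CM_g$ be two Doeblin measures. I would construct a joint stationary coupling $\coupmeas$ of two $g$-chains $X\ii n,Y\ii n$ on the natural extension $\bar\X$ with respective marginal laws $\mu_1,\mu_2$, and then study the agreement length $\kappa\ii n:=\kappa(X\ii n,Y\ii n)$. If one establishes $\coupmeas(\kappa\ii 0<m)\to 0$ (in the appropriate scaling limit) for every $m$, the coupling inequality $|\mu_1([x]_m)-\mu_2([x]_m)|\le\coupmeas(\kappa\ii 0<m)$ forces $\mu_1([x]_m)=\mu_2([x]_m)$ on every initial cylinder, and hence $\mu_1=\mu_2$.

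The local building block is a maximal (Vasershtein) coupling at each transition. When $\kappa\ii n=k$ the conditional distributions $g(\cdot\,X\ii n)$ and $g(\cdot\,Y\ii n)$ differ in total variation by at most $\tfrac12(e^{r_k}-1)$, and under (\ref{eq:assumption}) this is $\le (1-\e)/\sqrt{k}$ for some $\e>0$ and all $k$ large enough. Thus a Markovian maximal coupling sends $\kappa$ from $k$ to $k+1$ with probability at least $1-(1-\e)/\sqrt{k}$, and to $0$ otherwise; the ensuing $\kappa$-process is a reflected walk with unit upward drift and a collapse rate of order $1/\sqrt{k}$.

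The heart of the argument is showing that $\kappa\ii n$ is typically large. A purely Markovian bound is defeated by $\sum_k k^{-1/2}=\infty$: $\kappa$ returns to $0$ infinitely often, and a legitimate stationary distribution supported on finite values exists, which is consistent with (but does not contradict) $\mu_1\ne\mu_2$. To get strict uniqueness one must either (i) use a non-Markovian coupling built on the two-sided extension $\bar\X$ (in the spirit of coupling-from-the-past and Berbee) that reuses information across many transitions and aligns the collapses of the two processes, or (ii) replace $\kappa$ by a finer statistic, e.g.\ a log-likelihood-ratio martingale whose quadratic variation is governed by $\sum r_k^2$, and exploit a sharp diffusion comparison. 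The constant $2$ in (\ref{eq:assumption}) then arises as the critical threshold: the excursion probability of reaching height $N$ is roughly $\exp\!\bigl(-c\sum_{k\le N}k^{-1/2}\bigr)\sim\exp(-2c\sqrt N)$, and balancing this against the drift term requires precisely $c<1$, i.e.\ $\limsup r_n\sqrt n<2$.

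The principal obstacle is the last paragraph's sharp comparison: the elementary per-step TV estimate is wasteful by exactly the factor that separates $o(n^{-1/2})$ of \cite{johob3} from the present threshold, and recovering it demands a careful excursion/martingale analysis (presumably tuned so that only in the regime $\limsup r_n\sqrt n<2$ does the coupled process drive $\kappa\ii n$ to $\infty$ in distribution). Once that sharp comparison is in place, the coupling inequality gives $\mu_1=\mu_2$ as described, completing the proof.
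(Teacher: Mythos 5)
Your overall strategy (couple two $g$-chains, track the agreement length, exploit that the relevant quantity is $\sum_k r_k^2$ rather than $\sum_k r_k$) points in the right direction, but the proof has a genuine gap exactly where you acknowledge it: the ``sharp comparison'' is not an implementation detail, it is the theorem. The paper fills it with a \emph{block} coupling over geometrically growing blocks: at agreement level $B$ one couples the next $b=(K-1)B$ symbols in one shot, and the Hellinger-integral estimate $1-\dtv\ge H^2/2$ together with $\log\cosh(u/2)=u^2/8+O(u^4)$ and $\sum_{k=B+1}^{KB}1/k\approx\log K$ gives a per-block success probability at least of order $K^{-c^2/4}$, which exceeds $1/K$ precisely when $c=\limsup r_n\sqrt n<2$. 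Your per-step maximal coupling cannot be repaired by a constant: multiplying the one-step survival probabilities $1-O(1/\sqrt k)$ over $k\in[B,KB]$ yields roughly $\exp(-c(\sqrt{KB}-\sqrt B))$, which tends to $0$ as $B\to\infty$ and so is \emph{not} bounded below by any fixed $1/K$; the quadratic (Hellinger) gain of the block coupling is what converts the $1/\sqrt k$ scale into the threshold $c<2$, and your option (ii) only gestures at this without carrying it out. Your heuristic derivation of the constant ($\exp(-2c\sqrt N)$ excursion probabilities ``balanced against the drift'') does not correspond to a valid estimate and does not produce the paper's mechanism.

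The endgame is also not the right target. Even with the sharp block estimate, the coupled agreement length does \emph{not} go to infinity in any straightforward sense: the per-block success probabilities are only bounded below by $1/K$ (and capped at $1/2$ in the paper's auxiliary chain), so collapses occur a.s.\ infinitely often, and the paper explicitly remarks it cannot control the $\liminf$ of the minorizing chain $Y\ii n/n$. What is actually proved is only $\limsup_n Y\ii n/n=1$ a.s., via renewal excursions of \emph{infinite mean} (because block lengths grow like $K^\ell$ while survival probabilities stay above $1/K$) and Kesten's theorem on $\limsup M_k/(M_1+\cdots+M_{k-1})=\infty$. Uniqueness then follows not from your cylinder coupling inequality $|\mu_1([x]_m)-\mu_2([x]_m)|\le\coupmeas(\kappa\ii 0<m)$ (whose right-hand side is not shown to be small, and for a stationary coupling has no $n\to\infty$ to exploit), but from a Birkhoff-ergodic-average contradiction: starting the two chains from two putative distinct ergodic measures and a separating local function $f$, the time-averaged difference of $f$ along the coupled chains must stay bounded away from $0$, yet it is $O((n-Y\ii n+N)/n)$, which is arbitrarily small along a subsequence by the $\limsup$ statement. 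So both the key lemma and the mechanism for converting the coupling estimate into uniqueness are missing from your proposal.
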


\begin{theorem}\label{thm:main2}
There exists a Doeblin function with a unique Doeblin measure which is not weakly mixing.
\end{theorem}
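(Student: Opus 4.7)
The plan is to take the Berger--Hoffman--Sidoravicius Doeblin function $g$ on $X=\{0,1\}^{\NN}$ --- which by the appendix has exactly two ergodic Doeblin measures $\mu^{(0)},\mu^{(1)}$ related by the bit-flip involution $\sigma$ --- and extend it to a continuous Doeblin function $\tilde g$ on the alphabet $\tilde A = A\times\{0,1\}$, so that the extra ``parity'' coordinate is forced to flip deterministically at every step of the chain, while the underlying Berger dynamics is read through a phase-dependent projection. Since the paper's definition of Doeblin function does not require positivity, I will allow $\tilde g$ to vanish on configurations where two consecutive parities coincide.

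Specifically, set $\tilde g((a,b)\,\tilde x)=0$ when $b=b_0(\tilde x)$, and $\tilde g((a,b)\,\tilde x)=g(a\,\pi_b(\tilde x))$ otherwise, where $\pi_b\colon\tilde X\to X$ forgets the parity coordinate and, for $b$ in one of the two parity classes, is additionally composed with $\sigma$. Continuity follows from locality of the vanishing constraint, and $\sum_a g(a\,\cdot)=1$ transfers to $\sum_{(a,b)}\tilde g(\cdot)=1$. Every Doeblin chain of $\tilde g$ is then supported on the shift-invariant closed set $\tilde X_0 = \{\tilde x : b_{i+1}=1-b_i\text{ for all }i\}$, split into two parity classes swapped by $\tilde T$.

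Failure of weak mixing is then immediate. The continuous non-constant observable $f(\tilde x) = (-1)^{b_0(\tilde x)}$ has $\tilde\mu$-mean zero for any Doeblin measure $\tilde\mu$ (shift-invariance forces equal mass on the two parity classes) and satisfies $f\circ\tilde T = -f$ $\tilde\mu$-almost surely, by construction. Therefore $\tilde L f = -f$ in $L^2(\tilde\mu)$; the eigenvalue $-1$ on a non-constant eigenfunction rules out weak mixing, and a fortiori $(\tilde L^*)^n\nu\not\to\tilde\mu$ for generic initial $\nu$.

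The hard part is uniqueness. Decomposing $\tilde\mu = \tfrac12\tilde\mu_0 + \tfrac12\tilde\mu_1$ along the parity classes and projecting via the two $\pi_b$'s gives measures $\mu_0,\mu_1\in\CM(X)$. The identities $\tilde L^*\tilde\mu = \tilde\mu$ and $\tilde T_*\tilde\mu = \tilde\mu$, combined with the $\sigma$-twist in the $\pi_b$'s, translate into a closed system of compatibility relations among $\mu_0,\mu_1$ involving $L_g^*$, $T_*$ and $\sigma_*$. The appendix result that $\CM_g$ equals the segment $\{t\mu^{(0)}+(1-t)\mu^{(1)}:t\in[0,1]\}$ with exactly these two extreme points is then used to collapse the naive one-parameter family of candidate pairs down to the single admissible choice $(\mu_0,\mu_1)=(\mu^{(0)},\mu^{(1)})$, pinning $\tilde\mu$ down uniquely. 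This is the principal obstacle: the deterministic parity flip alone leaves a one-parameter family of candidate Doeblin measures for $\tilde g$, and the bit-flip twist inside the $\pi_b$'s must be engineered delicately so that the cross-parity consistency equations rule out every non-extremal convex combination; \emph{exactness} (not merely $\ge 2$) of the number of ergodic Doeblin measures of $g$ is essential for the argument, since any additional ergodic component would produce a non-trivial $L_g^*$-periodic orbit and, through the phase-flip, a second Doeblin measure for $\tilde g$.
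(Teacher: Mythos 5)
There are two genuine problems with your plan, and they are exactly where the real content of the theorem lies. First, the decision to let $\tl g$ vanish (forcing the parity coordinate to alternate deterministically) changes the problem being solved. The folklore question this theorem answers is posed for regular Doeblin functions, i.e.\ $g>0$ with finitely many symbols; that is what the paper's construction delivers (it sets $\tl g(x)=g(\F x)$ with $\F[(x_i,y_i)]=((-1)^ix_i,y_i)$, which keeps strict positivity, does not enlarge the alphabet, and is what Corollary~\ref{thm:noptwise} and the claim about non-convergence of $\L_g^n$ rest on). Once zeros are permitted, the statement becomes essentially trivial: on two symbols the deterministic alternation $g(ax)=\ett_{a\neq x_0}$ already has a unique Doeblin measure, namely the average of the two point masses on the period-two orbit, and it is not weakly mixing; no Berger--Hoffman--Sidoravicius input is needed. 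So your construction, even if completed, would not prove the theorem in the sense intended (and the fact that the hard machinery becomes superfluous in your setting is a symptom that the period-two behaviour has been hard-coded rather than produced by a positive $g$, which is the whole difficulty).

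Second, the uniqueness step --- which you correctly identify as the crux --- is not actually carried out, and the ingredient you cite for it is insufficient. The parity components $\tl\mu_0,\tl\mu_1$ of a candidate Doeblin measure for $\tl g$ project to measures on $X$ that are in general only $2$-periodic under the shift, i.e.\ they are $g$-chains of infinite age but \emph{not} stationary; knowing that $\CM_g$ is the segment with extreme points $\mu^{(0)},\mu^{(1)}$ therefore does not constrain them. What is needed is the stronger statement~\eqref{sole}, that $\nu_+,\nu_-$ are the only extremal elements of the space $\CK_g$ of $g$-chains --- this is precisely the ``Moreover'' clause of Theorem~\ref{thm:bhs}, and the appendix proves it at the level of Doeblin chains, not just Doeblin measures. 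The paper then does the actual work in Lemma~\ref{lem:commute}: the commutation rules $\F\T=\M\T\F$, $g\circ\M=g$ are used to show that $\nu\mapsto\nu\circ\F^{-1}$ maps $\CK_g$ bijectively onto $\CK_{\tl g}$ and that $\tl\nu_+\circ\T^{-1}=\tl\nu_-$, so the only stationary element of $\CK_{\tl g}=\operatorname{conv}\{\tl\nu_+,\tl\nu_-\}$ is the midpoint. Your sketch (``a closed system of compatibility relations \dots is then used to collapse the one-parameter family'') gestures at such a computation but never performs it, and as written it would have to be redone with the chain-level exactness as input; without that, uniqueness for $\tl g$ is simply not established. (Minor point: the BHS example lives on the four-letter alphabet $\{0,1\}\times\{-1,1\}$ and the relevant involution flips only the spin coordinate, not a ``bit-flip'' on $\{0,1\}^\NN$.)
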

This implies that the sequence of the iterates of the transfer operator does not converge.

\begin{remark} It is natural to ask whether there exists a positive constant
  $c^\ast$ such that if
\[
    \var_n \log g \le \frac {c^\ast}{\sqrt n}, \text{ for all } n \text{ large, }
\]
then uniqueness holds, but for all larger constants one can find an example of
non-uniqueness, i.e.\ whether a phase transition occurs when the variations of
$\log g$ are of order $1/\sqrt{n}$. We conjecture that this is indeed the case.
In Theorem~\ref{thm:main}, we provide 2 as a lower bound for $c^\ast$. If a phase
transition is established, we might ask whether 2 is indeed the critical value.
\end{remark}

\begin{remark}
  Sometimes authors use the variations of $g$, $\var_{n} g$ rather than
  $\var_{n} \log g$, in their conditions for uniqueness (e.g.\ in Doeblin and
  Fortet~\cite{doeblin} and in Johansson and \"Oberg~\cite{johob1}) or for
  counterexamples (Berger et al.~\cite{berger}), but these conditions are many
  times equivalent, owing to the compactness of $\X$ and that $g>0$. Using
  $\var_n \log g$ conforms more to the thermodynamic formalism of ergodic
  theory, since for various conditions one considers $\var_n \phi$, where the
  function $e^\phi$ is the weight function of the transfer operator. The
  choice between using $\var_n g$ or $\var_n \log g$ is usually just a matter of
  taste or context, and even in Johansson et al.~\cite{johob1} one can replace
  the uniqueness condition $\var_n \log g= o(1/\sqrt{n})$, $n\to \infty$, with the
  equivalent condition $\var_n g = o(1/\sqrt{n})$, $n\to \infty$. However, in our
  context, the two conditions are not equivalent, and we ask
  whether Theorem~\ref{thm:main} should hold with $\var_n g$ instead of
  $\var_n \log g$ and with possibly a different constant. \newline
\end{remark}

\begin{remark}
  In Johansson, \"Oberg and Pollicott~\cite{johob3} the Bernoulli propoerty
  follows when $\var_n \log g= o(1/\sqrt{n})$, $n\to \infty$. This consequence does not follow from the
  method of proof in the present paper. As was clear already from Kalikow's
  paper~\cite{kalikow}, the Bernoulli property is not automatic for a unique
  Doeblin measure. We ask under what conditions uniqueness implies the
  Bernoulli property (very weak Bernoulli property). 
  \end{remark}

\section{Proof of Theorem~\ref{thm:main}}\label{sec:uniq}
We start by constructing our main tool, namely the block-coupling.

\subsection{The block-coupling}\label{sec:dbc}

The block coupling that we use here was first developed in~\cite{johob3}, where
the authors used it to estimate $\bar d$-metrics between various chains. In this
paper we use the same block coupling, but for a different purpose, and in
particular we do not use (and thus also do not define) $\bar d$-metrics. We
start here by defining the block coupling, then, in Subsection~\ref{sec:bndtv},
we estimate its success probability, and in subsequent subsections we use it to
prove Theorem~\ref{thm:main}.

\subsubsection{The block extension $\xi^b_x$ and the block-coupling $\zeta^b_{x,\tl x}$}

Let $b\ge1$ be fixed. For a $g$-chain $x\ii n$, conditioned on the current state
$x\ii n = x$, the distribution of the state $y=x\ii {n+b}$, at $b$ time steps in
the future, is $\Ls^b \delta_x$. The transition from $x$ to $y$
means prepending a randomly sampled \emph{prefix} $p=\iota_b(y)\in A^b$ with distribution
$p \sim \xi_x^b = \Ls^b \delta_x\circ \iota_b^{-1}$. This means that we sample the prefix $p\in A^b$ with probability
\begin{equation}\label{def:xi}
  \xi_x^b(p) = \prod_{k=0}^{b-1} g(\T^k (px)).
\end{equation}
The intermediate states are then sampled with the right distribution, i.e.\  $x\ii{n+i}\sim\Ls^{i}\delta_x$, 
for $1\le i\le b$. We refer to the sampling of $y\in\{px:p\in A^b\}$ as a \emph{block extension}. In the picture of the natural
extension $\bar\x$, it amounts to exposing a \emph{block}
$\bar\x_{n+1}\bar\x_{n+2}\cdots\bar\x_{n+b}$ of $b$ coordinates of the $g$-chain
$\bar x\in\bar\X$ in one step.

Now consider a pair of $g$-chains $(x\ii n,\tl x\ii n)$ and two present states
$x=x\ii{n}$ and $\tl\x = \tl\x \ii{n}$ in $\X$. A \emph{block-coupling} of
length $b\ge1$ at $(x,\tl\x)$ is a sampling of
$(y,\tl y)=(x\ii{n+b},\tl x\ii{n+b})$ such that the marginal distributions of
$(y,\tl y)$ are block extensions. That is, a distribution
$\zeta(y,\tl y) \in\CM(\T^{-b}\x \times \T^{-b}\tl\x)$, such that
\[\text{$\zeta(y) = \Ls^b\delta_x$ and $\zeta(\tl y) = \Ls^b\delta_{\tl x}$.}\]

The event of \emph{success} means that
\[
  (y,\tl y) \in S:= \{(y,\tl y)\mid \iota_b(y)=\iota_b(\tl y)\} = {(\iota_b\times\iota_b)}^{-1}(\{(p,p) | p\in A^b\})
\]
means that the prefixes, $p=\iota_b(y)$ and $\tl p=\iota_b(\tl y)$, prepended to $x$ and
$\tl\x$ are equal.
It is a standard result that we can find a coupling
$\zeta^b_{x,\tl x}$ such that
\begin{equation}\label{dtv-prefix}
  \zeta^b_{x,{\tl\x}}(S) = 1-\dtv(\xi_x^b,\xi_{\tl x}^b),
\end{equation}
where $\dtv$ is the total variation distance. Moreover, the event $S$ of success
means that the agreement length $\kappa$ from~\eqref{defkappa} increases, i.e.\ for
$0\le i \le b$ we have
\begin{equation}\label{agreemore}
  (y,\tl y) \in S \implies \kappa(\T^i y,\T^i \tl y)= \kappa(x,\tl\x) + b-i.
\end{equation}

An important observation from~\cite{johob3} is that, for longer blocks, the
total variation distance $\dtv(\xi,\tl\xi)$ is bounded by a function of the sum of
the squares of the variations of $\log g$ rather than, say, a function of the
sum of the variations. This observation will be made precise and will be used in
Subsection~\ref{sec:bndtv} in order utilize our variational condition for the
purpose of controlling the coupling.

\subsubsection{The associated Markov chain}
The way in which we use the block coupling is to construct a pair of coupled
$g$-chains $(x\ii{n},\tl\x\ii{n})$, $n\ge0$, that agree in the coordinates to a
large extent. Similarly to~\cite{johob}, we associate the pair
$(x\ii{n},\tl\x\ii{n})$ of coupled $g$-chains to a Markov chain $Y\ii{n}$ with
state space $\ZZ$ such that for all $m\ge0$, we maintain the bound
\begin{equation}\label{Ydom}
  \kappa(x\ii{m},\tl\x\ii{m})\ge Y\ii{m}.
\end{equation}

We now explain the transition rules of the Markov chain $\big(Y^{(n)}\big)$, and
then we illustrate its coupling with our Doeblin chains. Let
$\CB=\{B_\ell:\ell\ge0\} \subseteq \NN_0$ be a set of states, which we call ``breaking states'',
ordered so that $0=B_0<B_1<B_2<\dots$. If the Markov chain is in a state outside
of $\CB$, then it gets incremented by $1$ with probability $1$. If the Markov
chain is in a state in $\CB$, then the transition is more complex, see precise
description below. Hence the name `breaking states'. Let $b_\ell=B_{\ell+1}-B_{\ell}$ be
the distance between two consecutive breaks. Let also
\begin{equation}\label{y-prob}
  p_\ell = \min\big\{1/2,1 - \inf_{x,\tl x} \{ \dtv(\xi^{b_\ell}_{x},\xi^{b_\ell}_{\tl x}) \mid \kappa(x,\tl\x) \ge B_\ell \}\big\}.
\end{equation}
We start the chain at $Y\ii0=0$ and, for $n\ge0$, we let
\begin{equation}\label{Ydef}
  Y\ii{n+1} = \begin{cases}
                Y\ii n  +1 & \text{w. prob. 1 if } Y\ii n\not\in\CB \\
                Y\ii n  +1 & \text{w. prob. $p_\ell$ if } Y\ii n=B_\ell\in\CB \\
                -b_\ell + 1 & \text{w. prob. $1-p_\ell$ if } Y\ii n=B_\ell\in\CB. \\
              \end{cases}
\end{equation}
Thus, the chain $Y\ii{n}$ moves deterministically upwards at unit speed, i.e.
$Y\ii{n+1}=Y\ii n +1$, whenever $Y\ii n\not\in\CB$. In case it reaches a breaking
state $Y\ii{n} = B_\ell$, it continues upwards with probability $p_\ell$ and with
probability $1-p_\ell$, it jumps to the state $-b_\ell+1$ below zero.

If $Y\ii n = B_\ell$ then~\eqref{Ydom} implies $\kappa(x\ii n,\tl x\ii n)\ge B_\ell$. Hence, the definition~\eqref{y-prob} implies that
\[ p_\ell \le \zeta^{b_\ell}_{x\ii n,\tl x\ii n}(S).\] Hence, we may couple the event
$Y\ii{n+1}=Y\ii n + 1$ in state $B_\ell$ with a \emph{successful} block extension
$(x\ii{n+b_\ell},\tl x\ii {n+b_\ell})$ of the chains. In this case, the chain $Y\ii m$
moves deterministically upwards until it reaches the next state
$B_{\ell+1}=B_\ell + b_\ell$ at time $n+b_\ell$ and relation~\eqref{agreemore} implies
that~\eqref{Ydom} remains valid for times $m$ where $n\le m \le n+ b_\ell$. In case
$Y\ii {n+1}=-b_\ell+1$, the chain moves deterministically upwards until it hits
state zero at time $n+b_\ell$. In this case~\eqref{Ydom} remains trivially true for
$n\le m \le n+b_\ell$.

Lemma~\ref{lem:dtv} below shows that there exist $K$ and $N$ (both large), such that the definition $B_0=0$ and for $\ell=1,2,\ldots$
\begin{equation}\label{B_elldef}
  B_\ell := N \cdot K^{\ell-1}, \quad\text{i.e.\ $b_\ell=(K-1)B_\ell$.}
\end{equation}
implies
\begin{equation}\label{p_ellbnd}
p_\ell  > \frac1K.
\end{equation}
 Thus, we define the breaking states $\CB$ according
to~\eqref{B_elldef} above.

In Subsection~\ref{sec:bndtv} we state and prove Lemma~\ref{lem:dtv} which we referred to before.
In Subsection~\ref{sec:analmark}, we use renewal
theory and a theorem by Kesten~\cite{kesten} to further study the chain
$Y\ii{n}$. Then in Subsection~\ref{sec:pfmn} we use this construction to prove
Theorem~\ref{thm:main}.

\subsection{Bounding the total variation distance}\label{sec:bndtv}
In order to provide the constant 2 in our theorem, we use some familiar
estimates of the total variation metric based on the Hellinger integral
(cf.~\cite{johob3},~\cite{jacod}). More precisely,  let $\xi=\xi_x^b$ and $\tl\xi=\xi_{\tl x}^b$ be as in~\eqref{dtv-prefix} so that
  $\zeta_{x,\tl\x}^b(S) = 1-\dtv(\xi,\tl\xi)$. The Hellinger integral is useful for us
  because of its use to bound the total variation distance. Indeed, using
  Cauchy-Schwarz, it is easy to verify (as in e.g.~\cite{jacod}) that
  \begin{equation}\label{eq:std}
    1-\dtv(\xi,\tl\xi) \ge 1-\sqrt{1-H^2} \ge \frac {H^2}2,
  \end{equation}
 where, for discrete measures, the Hellinger integral has the form
  \begin{equation*}
    H := H(\xi,\tl \xi) := \sum_{p\in A^b} \sqrt{\xi(p)\,\tl\xi(p)}.
  \end{equation*}

\begin{lemma}\label{lem:dtv}
  Assume~\eqref{eq:assumption} holds. Let $N\in\NN$. For a large enough integer $K\ge2$ the minimum success probability satisfies
  \begin{equation}
    \inf_{B \in \NN ;\x,\tl\x \in\X}
    \left\{ \zeta_{x,\tl\x}^b(S) : \kappa(x,\tl\x) \ge B \ge N\right\}
    > \frac 1K
    \label{eq:dtv}
  \end{equation}
  where $b = b(B) = (K-1)\cdot B$.
\end{lemma}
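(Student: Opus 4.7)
The plan is to lower bound the Hellinger affinity $H := \sum_{p\in A^b}\sqrt{\xi_x^b(p)\,\xi_{\tilde x}^b(p)}$ and then apply~\eqref{eq:std} in the form $\zeta^b_{x,\tilde x}(S) \geq H^2/2$. Fixing any $c$ with $\limsup r_n \sqrt n < c < 2$, the target is $H \geq c_1\, K^{-c^2/8}$ for some constant $c_1>0$; since $c^2/4 < 1$, this makes $H^2/2$ exceed $1/K$ for all sufficiently large $K$.

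The first main step is a sharp one-step estimate. For $y,\tilde y$ with $\kappa(y,\tilde y)\ge m$, set $Y_a=g(a\tilde y)/g(ay)$. Then $\sum_a g(a\tilde y)=1$ gives $\E_{a\sim g(\cdot y)}[Y_a]=1$, while $|\log Y_a|\le r_{m+1}$ gives $\sqrt{Y_a}\in[e^{-r_{m+1}/2},\,e^{r_{m+1}/2}]$. The identity $(\E\sqrt{Y_a})^2 = 1-\var(\sqrt{Y_a})$ (valid because $\E[Y_a]=1$), combined with the two-point maximiser of $\var(\sqrt{Y_a})$ under these constraints, yields
\[
\sum_{a\in A}\sqrt{g(ay)\,g(a\tilde y)}\;=\;\E[\sqrt{Y_a}]\;\geq\;\frac{1}{\cosh(r_{m+1}/2)},
\]
and hence $-\log\sum_a\sqrt{g(ay)g(a\tilde y)}\le\log\cosh(r_{m+1}/2)\le r_{m+1}^2/8$.

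The second step iterates this bound through the recursive structure of $\xi_x^b$. Writing $q_k := a_k a_{k-1}\cdots a_1$ and
\[
H \;=\; \sum_{a_1,\dots,a_b\in A}\;\prod_{k=1}^{b}\sqrt{g(q_k\,x)\,g(q_k\,\tilde x)},
\]
one sums first over $a_b$, then $a_{b-1}$, and so on. At step $k$, with $a_{k-1},\ldots,a_1$ fixed, $\kappa(q_{k-1}x,q_{k-1}\tilde x)=B+k-1$, and the exact identity $\kappa(a_kq_{k-1}x,\,a_kq_{k-1}\tilde x)=B+k$ lets us apply the one-step bound with $m+1 = B+k$. The resulting factor $\geq 1/\cosh(r_{B+k}/2)$ is uniform in $a_{k-1},\dots,a_1$, so it may be pulled out of the remaining sums. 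Iterating gives
\[
H \;\geq\;\prod_{k=1}^{b}\frac{1}{\cosh(r_{k+B}/2)},\qquad -\log H \;\leq\; \sum_{k=1}^{b}\frac{r_{k+B}^{\,2}}{8}.
\]

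The final step is the variational sum. By~\eqref{eq:assumption}, $r_n\le c/\sqrt n$ for $n\geq N_0$; the at most $N_0$ terms with $k+B<N_0$ contribute only an $O(1)$ correction via the trivial bound $r_n\leq 2\|\log g\|_\infty$. For $b=(K-1)B$ and $B\geq N$,
\[
\sum_{k=1}^{b}\frac{r_{k+B}^{\,2}}{8}\;\leq\;\frac{c^2}{8}\sum_{m=B+1}^{KB}\frac{1}{m}+O(1)\;\leq\;\frac{c^2}{8}\log K + O(1),
\]
so $H\ge c_1 K^{-c^2/8}$ and $\zeta^b_{x,\tilde x}(S)\ge (c_1^2/2)K^{-c^2/4}$. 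Since $c^2/4<1$, this exceeds $1/K$ once $K$ is large (depending on $N$ and $c$).

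The main obstacle is keeping the constant sharp: the one-step estimate must give $\log\cosh(r/2)\sim r^2/8$ (not, say, $r^2/4$ from a cruder variance bound) for the threshold to emerge as $c<2$; and the iteration is valid only because the one-step lower bound is uniform in the previously sampled letters, which in turn relies on the exact increment $\kappa(ay,a\tilde y)=\kappa(y,\tilde y)+1$.
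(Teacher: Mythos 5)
Your proof is correct and follows essentially the same route as the paper: bound the success probability by $H^2/2$ via the Hellinger affinity, reduce $H$ to the product $\prod_{k=B+1}^{KB}1/\cosh(r_k/2)$, and conclude with $\log\cosh(u/2)\le u^2/8$ and the harmonic-sum estimate $\sum_{B+1}^{KB}1/k\approx\log K$, which is exactly where the sharp constant $2$ comes from. The only (harmless) difference is in how the per-letter factor is justified — you sum letter by letter and use the two-point extremal bound $\E[\sqrt{Y}]\ge 1/\cosh(r/2)$ given $\E[Y]=1$, $|\log Y|\le r$, while the paper uses the identity $\sqrt{ab}=\frac{a+b}{2}\cdot\frac1{\cosh\delta}$ together with the normalization of the averaged kernel $\frac{g(aw)+g(a\tilde w)}2$ — and in minor bookkeeping (a global $\log\cosh(v)\le v^2/2$ bound and an $O(1)$ absorption of small indices instead of the paper's $O(u^4)$ and $c_2/B$ terms).
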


\begin{proof}

  Let $K\ge 2$ and $B\ge N\ge 1$ be integer numbers and define $b = (K-1)\cdot B$.  
  For $p\in A^b$, let $y=px$ and $\tl y=p\tl\x$. Using
  $\xi(p) = \prod_{j=0}^{b-1} g(\T^j y)$ and $\tl\xi(p) = \prod_{j=0}^{b-1} g(\T^j\tl y)$,
  we obtain
  \begin{equation}
    H  = \sum_{p \in A^b} \prod_{j=0}^{b-1} \sqrt{g(\T^j y)g(\T^j\tl y)}.
  \end{equation}
  We have for all $j\in\{0,\ldots, b-1\}$
  \begin{equation}
    \sqrt{g(\T^j y)\,g(\T^j\tl y)} =
    \frac{g(\T^j y) + g(\T^j\tl y)}{2} \cdot \frac1{\cosh(\delta_j)},
  \end{equation}
  where
  \[
    \delta_j(u) = \left| \frac{\log g(\T^j y) - \log g(\T^j\tl y)}{2} \right|
    \le \frac {r_{B+b-j}}{2},
  \]
  since, by \eqref{agreemore}, $\kappa(\T^j y,\T^j \tl y)\ge B+b-j$ 
  when $\kappa(x,\tl\x)\ge B$. Since, for
  all $w,\tl w\in\X$, we have \(\sum_{a\in A} (g(aw)+g(a\tl w))/2 = 1\), it follows
  that
  \[
    \sum_{p \in A^b} \prod_{j=0}^{b-1} \frac{g(\T^j y) + g(\T^j\tl y)}{2} = 1.
  \]
 Since $B+b=KB$, we have
  \[
    H \ge \exp\left(-\sum_{k=B+1}^{KB} \log\cosh(r_k/2)\right).
  \]

Since $\log\cosh (u/2) = \frac 18 u^2 + \Ordo{u^4}$, 
we have for $c = \limsup r_n\sqrt{n}$ and a constant $c_1\ge0$ that
  \begin{align*}
    \log H &\ge -\frac 18\,  
    \sum_{k=B+1}^{KB} r^2_{k} -
    c_1\sum_{k=B+1}^{KB} r^4_{k} \ge
    -\frac{c^2}8 \cdot \sum_{k=B+1}^{KB} \frac 1k -
    c^4 \cdot c_1 \sum_{k=B+1}^{KB} \frac 1{k^2}.
  \end{align*}
  Using the formula
  \[
    \sum_{k=1}^n \frac 1k = \log n + \gamma +\frac {1}{2n} + \Ordo{\frac 1{n^2}}
  \]
  for the partial sums of the harmonic series and the integral estimate 
  \[
  \sum_{k=B+1}^{KB} \frac 1{k^2} \le \frac 1{B+1} = \Ordo{1/B},
  \]
  we obtain
  \(
    \log H  \ge -\frac {c^2}8 \log K - {\frac {c_2}B},
  \)
  for some constant $c_2\ge0$.
  Thus
  \begin{equation}\label{H2}
   H^2 \ge \frac 2K \cdot \exp\left({\epsilon\log K -\log 2 - \frac {2c_2}B}\right),
  \end{equation}
  where $\epsilon:= 1 - c^2/4$, which is positive by assumption \eqref{eq:assumption}. 
  We can choose $K\ge2$ large enough so that 
  ${\epsilon\log K - \log 2 - \frac {2c_2}B} > \epsilon/2$. \label{eq:logH}
  Using~\eqref{eq:std} with the bound \eqref{H2} we obtain
  \[
    1-\dtv \ge \frac 1K \cdot e^{\epsilon/2}> \frac 1K,
  \] 
  which concludes the proof of Lemma~\ref{lem:dtv}.
\end{proof}


\subsection{A property of the associated Markov chain $Y\ii n$}\label{sec:analmark}

\begin{proposition}\label{prop:limsupY}
  Under the assumptions of Lemma \ref{lem:dtv}
  \begin{equation}\label{eq:limsupY}
    \limsup_{n\to\infty} \frac{Y\ii{n}}{n} = 1  \; a.s.
  \end{equation}
\end{proposition}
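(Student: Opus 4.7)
The upper bound $\limsup_n Y^{(n)}/n \le 1$ is immediate from~\eqref{Ydef}: since $Y^{(0)} = 0$ and $Y^{(n+1)} - Y^{(n)} \le 1$ in every case, we have $Y^{(n)} \le n$ for every $n$. For the lower bound the plan is to exploit the regenerative structure of $(Y^{(n)})$ at the state $0$. Since $p_\ell \le 1/2$ by~\eqref{y-prob}, the infinite product $\prod_\ell p_\ell$ vanishes, so almost surely the chain eventually fails at some breaking state $B_\ell$; after the reset to $-b_\ell+1$ it climbs deterministically back to $0$, showing that $0$ is recurrent. Let $0 = R_0 < R_1 < R_2 < \cdots$ be the successive return times to $0$ and let $\ell_k$ denote the highest breaking state attained during the excursion $[R_{k-1}, R_k)$. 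By the strong Markov property the $\ell_k$ are i.i.d., and a direct accounting of the deterministic upward segment and the deterministic return after the failure gives that the excursion length is
\[
  X_k \;:=\; R_k - R_{k-1} \;=\; B_{\ell_k+1} \;=\; K\, B_{\ell_k}.
\]

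Next I would estimate the tail of $X_k$. Lemma~\lemref{lem:dtv} gives $p_\ell > 1/K$ for every $\ell \ge 1$, while regularity of $g$ (continuous and strictly positive on the compact space $\X$) makes $p_0 > 0$. Hence $P(\ell_k \ge \ell) = \prod_{j<\ell} p_j \ge c\, K^{-\ell}$ for some $c>0$, which translates into the heavy-tail bound $P(X_k \ge x) \ge c'/x$ on the discrete support of $X_k$; in particular $E(X_k) = \infty$.

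Now I would invoke Kesten's theorem~\cite{kesten}, applied to the i.i.d.\ nonnegative sequence $(X_k)$ with infinite mean, to conclude that $\liminf_{k\to\infty} R_k/X_{k+1} = 0$ almost surely. Along any subsequence $k=k_j\to\infty$ realising this, set $n_j := R_{k_j} + B_{\ell_{k_j+1}}$ --- the time at which the $(k_j+1)$-th excursion reaches its peak. Then $Y^{(n_j)} = B_{\ell_{k_j+1}} = X_{k_j+1}/K$, so
\[
  \frac{Y^{(n_j)}}{n_j} \;=\; \frac{1}{1 + K\, R_{k_j}/X_{k_j+1}} \;\longrightarrow\; 1.
\]
Since $X_{k_j+1} \to \infty$ (it eventually dominates $R_{k_j} \ge k_j N$), one also has $n_j \to \infty$, giving $\limsup_n Y^{(n)}/n \ge 1$ almost surely, which combined with the upper bound yields the claimed equality.

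The main obstacle is the precise application of Kesten's result in the form ``the next term eventually dominates the partial sum by any arbitrarily large factor''. Making this rigorous reduces to a conditional Borel--Cantelli argument: one must show $\sum_k \overline{F}(c\,R_k) = \infty$ a.s.\ (where $\overline{F}$ is the tail of $X_k$) by pairing the tail bound $\overline{F}(x) \ge c'/x$ with the correct stochastic control on the growth of $R_k$ --- which is precisely where the heavy-tailed renewal theory of~\cite{kesten} enters.
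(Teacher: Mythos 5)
Your argument is correct and is essentially the paper's own proof: the same regeneration at state $0$, the same identification of the excursion length as $K$ times the peak breaking state, the same tail bound $\P(\ell_k\ge\ell)\gtrsim K^{-\ell}$ from Lemma~\ref{lem:dtv} (plus $p_0>0$) giving infinite expected excursion length, and the same appeal to Kesten followed by evaluating $Y^{(n)}/n$ at the peak time of a dominating excursion. Your closing worry is unneeded: Kesten's cited result is exactly the statement that for i.i.d.\ nonnegative $X_k$ with $\E(X_k)=\infty$ one has $\limsup_k X_k/(X_1+\cdots+X_{k-1})=\infty$ almost surely, so no separate Borel--Cantelli argument is required.
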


\begin{remark}
We do not have a similar control for the $\liminf$, and in fact one can produce examples where the $\liminf$ is strictly smaller than $1$.
\end{remark}

\begin{proof}[Proof of Proposition \ref{prop:limsupY}]
  In order to establish \qr{eq:limsupY}, we let $T_k$, $k\ge0$, denote the renewal
  process of return times to state $0$ of the Markov chain $Y\ii{n}$, i.e.\
  $T_0=0$ and $T_k = \inf \{n>T_{k-1} : Y_n = 0\}$. The differences
  $M_k=T_k-T_{k-1}$, $k=1,2,\dots$, constitute an iid sequence. Note that,
  $M_k = B_{L_k+1}$ where $B_{L_k}$ is the last visited breaking state by
  $Y\ii{n}$ in excursion number $k$, i.e.\
  $L_k:=\sup\{ \ell\in\ZZ_{\geq 0}: Y\ii{n} =B_\ell, n\in \interval[open right]{T_{k-1}}{T_k}\}$. If $L_k=0$ then
  $M_k=N$ and, otherwise, if $L_k\ge1$, we have
  \begin{equation}\label{eq:supYrelM}
    \sup \{Y\ii n : T_{k-1} < n \le T_k\} = B_{L_k} = M_k/K.
  \end{equation}
  Since, a.s., $L_k\ge1$ infinitely often, we have, almost surely,
  \begin{align*}\label{eq:limsupp}
    \limsup_{n\to\infty}
    \frac{Y\ii{n}}{n} &\ge \limsup_k \frac{M_k/K}{T_{k-1} + M_k/K}\\
                      & \ge 1 - \liminf \frac {K \cdot T_{k-1}}{M_k}.
  \end{align*}
  which equals $1$ almost surely if and only if
  \begin{equation}
    \label{eq:kesten}
    \limsup \frac{M_k}{T_{k-1}}
    = \limsup \frac{M_k}{M_1+M_2+\cdots+M_{k-1}} = \infty, \quad\text{a.s.}
  \end{equation}
  A result by Kesten (\cite{kesten}, 1971) says that \qr{eq:kesten} holds as
  soon as
  \begin{equation}\label{eq:infE}
    \E\left(M_k\right) = \infty,
  \end{equation}
  which is true in our case. We have
  \[
    \E( M_k ) = \E( K^{L_k}N ) = \sum_{\ell=0}^{\infty} \P(L_k=\ell) K^{\ell}N = \infty,
  \]
  since, for $\ell\ge1$,
  \[
    \P(L_k=\ell) =
    \left(\prod_{i=0}^{\ell-1} p_i\right) \cdot (1-p_\ell)
    > p_0 {\Big(\frac 1K\Big)}^{\ell -1} \cdot \frac 12,
  \]
  where last inequality follow from Lemma~\ref{lem:dtv} and
  $p_\ell\le \frac 12$ by \eqref{y-prob}. This establishes that the construction
  satisfies \qr{eq:limsupY}.
\end{proof}

\subsection{Finishing the proof of Theorem~\ref{thm:main}} \label{sec:pfmn}

\begin{proof}[Proof of Theorem~\ref{thm:main}]
  Assume for contradiction that there are two distinct Doeblin measures $\mu$ and
  $\tl\mu$ with respect to $g$. Since the space $\CM_g$ of Doeblin measures with
  respect to $g$ is convex and compact,
  we may assume that $\mu$ and $ \tl\mu$ are
  extremal, i.e.~\emph{ergodic}.

  Now, we choose the initial states $x\ii 0$ resp. $\tl\x\ii 0$ independently
  according to the distributions $\mu$ resp. $\tl\mu$, and then complete them to
  chains $x\ii{n}, \tl\x\ii{n}\in \X$, $n\ge1$, according to the coupling from
  Subsection~\ref{sec:dbc}. Since we sample the initial states using stationary
  distributions, we can assume that $x\ii{n}$ and $\tl\x\ii{n}$ have
  distribution $\mu$ and $\tl\mu$, for all $n\in\ZZ$.

  Since $\mu\not=\tl\mu$, we have some local function $f$ such that
  $\int f \, d \mu>\int f\, d\tl\mu$. Let $N$ be s.t. $f$ is in
  $\CF_N$.
  Then,
  by the pointwise Birkhoff Ergodic Theorem, almost surely, the following limits
  exist
  \[
    \lim_{n\to\infty} \frac 1n \sum_{i=1}^n f(\x\ii i) = \int f\,d\mu
    > \int f\,d\tl\mu = \lim_{n\to\infty} \frac 1n \sum_{i=1}^n f(\tl\x\ii i).
  \]
  In particular, it must hold almost surely that
  \begin{equation}\label{eq:mstdif}
    \liminf_{n\to\infty} \frac 1n \sum_{i=1}^n \big(f(\x\ii i) - f(\tl\x\ii{i})\big) > 0.
  \end{equation}
  The fact that $f$ is $\CF_N$-measurable means that
  $f(x\ii i)-f(\tl\x\ii i) = 0$ if $\kappa(x\ii i,\tl\x\ii i) \ge N$. Thus, by
  \qr{Ydom}, we have
  \[
    \frac 1n \sum_{i=1}^n \left(f(\x\ii i) - f(\tl\x\ii{i})\right)
    < \frac 1n \sum_{i=1}^n 2\| f \|_\infty \cdot \ett_{Y\ii i \le N}
    < 2 \| f \|_\infty \frac{n-Y\ii n +N}{n}.
  \]
  However,~\eqref{eq:limsupY} implies that the right hand side above is
  arbitrarily close to zero for infinitely many $n$, in contradiction
  to~\eqref{eq:mstdif}. We conclude that there can only be one Doeblin measure
  corresponding to $g$.

\end{proof}

\begin{remark}
  Here we use ergodicity in the sense of stochastic processes, i.e.\ that
  \[
    \lim \frac 1n \, \sum_{k=1}^n f(x\ii k) = \int f \,d\mu \quad\text{$\mu$-a.e.},
  \]
  i.e.\ in terms of iterations over ``future'' values. In ergodic theory,
  Birkhoff's theorem is usually stated as
  \[
    \lim \frac 1n \, \sum_{k=1}^n f(\T^k\x) = \int f \,d\mu \quad\text{$\mu$-a.e.},
  \]
  i.e.\ in terms of iterations over ``past'' values. But, for Doeblin measures,
  we have equivalence between these two statements, since
  \[
    (\X,\T,\mu)\ \text{ergodic} \iff
    (\bar\X,\bar\T,\bar\mu)\ \text{ergodic} \iff
    (\bar\X,\bar\T^{-1},\bar\mu)\ \text{ergodic},
  \]
  where $(\bar\X,\bar\T,\bar\mu)$, $\pi: \bar\X\to\X$, is the natural extension of
  $(\X,\T,\mu)$. But then,
  \[
    \lim \frac 1n \, \sum_{k=1}^n f(x\ii k) =
    \lim \frac 1n \, \sum_{k=1}^n f(\pi(\bar\T^{-k}\bar\x))
    \underset{\text{a.e.}}{=}
    \int f(\pi(\bar\x)) d\,\bar \mu(\bar\x) = \int f\,d\mu.
  \]
\end{remark}

\section{A non-mixing unique Doeblin measure}\label{sec:nonu}

\def\I{\mathsf{I}} In this section, we answer a well-known folklore question
that we in particular attribute to Peter Walters, namely whether uniqueness of a
Doeblin measure (in the case of finitely many symbols and $g>0$) implies that the iterates of
the transfer operator $\L_g$ converge. In fact we show by means of an example
that this is not necessary and that the unique Doeblin measure in our
construction is not even weakly mixing.

Recall that the transfer operator here is defined for continuous functions $f$
by
\[\L_g f(x)=\sum_{y\in T^{-1}x} g(y)f(y).\]

We know from Breiman~\cite{breiman} that a unique Doeblin measure $\mu$ implies
that for any continuous function $f:X\to\RR$, we have
\begin{equation}\label{breimp}
  \lim_{n\to \infty} \sup_x \bigg|\frac{1}{n} \sum_{k=1}^n \L_g^k f(x)-\int f\; d\mu\bigg|=0.
\end{equation}
The full uniform convergence of the iterates of the transfer operator implies
{\em strong mixing} for any Doeblin measure $\mu$. That is if
\[\lim_{n\to \infty} \sup_x \bigg| \L^n f(x)-\int f\; d\mu\bigg|=0,\]
then we also have the strong mixing of $\mu$:
\[\mu(\T^{-n} A \cap B) \to \mu(A)\mu(B),\]
as $n\to \infty$, for any Borel subsets $A$ and $B$ of $X$.

We construct a unique Doeblin measure $\mu$ that is not even {\em weakly mixing},
hence not strongly mixing, and then the iterates of the transfer operator do
not necessarily converge (uniformly or pointwise). Weak mixing for a measure $\mu$
means that
\[\frac{1}{n} \sum_{k=1}^n |\mu(\T^{-n} A \cap B)- \mu(A)\mu(B)|=0,\]
for any Borel subsets $A$ and $B$ of $X$. Observe that this does not follow from
Breiman's property~\eqref{breimp}.

\subsection{The construction}

Let $\CK_g$ denote the set of $g$-chains (of infinite age) on the state space $\X$.
The members of $\CK_g$ have also been studied under the name $G$-measures, see
e.g.~\cite{dooley1, dooley2}. The $g$-chain condition takes the form
\begin{equation}\label{gchain}
  \mu( x \mid \T^n x ) = g(x) \cdot g(T x) \cdots g(T^{n-1}x) \quad\text{for all $n\ge0$}.
\end{equation}
The space of Doeblin measures $\CM_g(\X)$ is the set of \emph{stationary}
$g$-chains. Our aim is to demonstrate that there is a system $(\X,\T,\tilde g)$
where the Doeblin measure is unique but there are multiple $g$-chains: That is,
\begin{equation}\label{goal}
  \text{$\CM_g=\{\mu\}$ and $\CM_g \subsetneq \CK_g$.}
\end{equation}

Our aim is to show the following theorem.
\begin{theorem}\label{thm:counter}
  There exists a continuous Doeblin function $g>0$ on $\X = A^\NN$, $|A|<\infty$, for
  which \qr{goal} holds.
\end{theorem}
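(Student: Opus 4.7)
The plan is to construct $\tilde g$ as a skew extension of the Berger--Hoffman--Sidoravicius (BHS) g-function $g_0$ of the appendix, which is a regular continuous g-function on $A_0^\NN$ ($A_0=\{0,1\}$) admitting precisely two ergodic Doeblin measures $\mu_0,\mu_1 = \mu_0\circ\sigma^{-1}$, where $\sigma$ is the bit-flip involution. I enlarge the alphabet to $A=A_0\times\{0,1\}$, writing $x=(y,z)\in A^\NN$ with $y$ the ``content'' and $z$ the ``phase'', and propose the template
\[
\tilde g((b,c)(y,z))\;=\;h(c\mid z)\cdot g_0^{(c)}(by),\qquad g_0^{(0)}=g_0,\ g_0^{(1)}=g_0\circ\sigma,
\]
where $h$ is a positive continuous g-function on $\{0,1\}^\NN$ biasing $z$ toward period-$2$ alternation.

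Two distinct g-chains of infinite age $\tilde\nu_0,\tilde\nu_1$ then arise by fixing the phase asymptotically to one of the two alternating patterns $010101\ldots$ or $101010\ldots$, paired with the appropriate BHS-driven content; neither is shift-invariant, giving $\CM_{\tilde g}\subsetneq\CK_{\tilde g}$. For uniqueness of the Doeblin measure, any shift-invariant Doeblin $\tilde\mu$ has its phase marginal equal to the unique stationary distribution of the autonomous phase chain, and conditioning on the phase and invoking the appendix's fact that $g_0$ has \emph{exactly} two ergodic Doeblin measures (hence also $g_0\circ\sigma$) pins down the content conditional distribution and forces $\tilde\mu$ to be unique.

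Non-weak mixing amounts to producing a measurable $\phi:A^\NN\to\ZZ/2$ with $\phi\circ T=1-\phi$ $\tilde\mu$-a.s., yielding eigenvalue $-1$ of the Koopman operator. Naive finite-coordinate candidates such as $\phi(x)=z_0(x)$ are ruled out by the positivity $\tilde g>0$, and purely tail-asymptotic candidates fail to flip under the shift since tails are shift-invariant. The resolution combines the local phase with a tail-asymptotic BHS regime indicator (the asymptotic majority distinguishing $\mu_0$ from $\mu_1$), with $h$ and the skew extension arranged so that phase flips and BHS regime switches are $\tilde\mu$-a.s.\ synchronized. Reconciling $\tilde g>0$ with the existence of an exact $\tilde\mu$-a.s.\ period-$2$ factor is the main obstacle of the proof, and it is precisely here that the appendix's sharp two-ergodic-measure BHS structure, rather than a weaker non-uniqueness statement, becomes indispensable.
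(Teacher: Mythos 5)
Your construction has a fatal internal inconsistency at its very first step, and the two substantive claims built on it are asserted rather than proved. The template $g_0^{(1)}=g_0\circ\sigma$ is vacuous: the BHS Doeblin function is \emph{invariant} under the global bit-flip involution $\sigma$ --- this is precisely the symmetry (the paper's condition~\eqref{gM}) that exchanges the two ergodic measures $\mu_0$ and $\mu_1=\mu_0\circ\sigma^{-1}$ --- so $g_0\circ\sigma=g_0$ and your $\tilde g$ collapses to the direct product $h(c\mid z)\,g_0(by)$, which admits at least the two Doeblin measures $\pi_h\otimes\mu_0$ and $\pi_h\otimes\mu_1$ (with $\pi_h$ stationary for $h$); thus~\eqref{goal} fails outright. (Incidentally, the BHS function already lives on a four-letter alphabet, not on $\{0,1\}^\NN$.) Even if one repairs the template so that the phase decides whether the new content bit aligns or anti-aligns with the BHS majority, two gaps remain. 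First, your non-stationary chains $\tilde\nu_0,\tilde\nu_1$ ``with phase asymptotically locked to an alternating pattern'' cannot exist for any ordinary positive biasing rule $h$: the phase is autonomous, so the phase marginal of every element of $\CK_{\tilde g}$ is an $h$-chain of infinite age, and for a positive local (say Markov) $h$ that chain is unique and stationary; to get a non-stationary phase you would need $h$ itself to admit non-stationary chains of infinite age, which is exactly the kind of object whose construction is the whole difficulty and is nowhere supplied. Second, the uniqueness step (``conditioning on the phase \dots{} forces $\tilde\mu$ to be unique'') is not an argument: the appendix's exactly-two-measures theorem concerns the homogeneous BHS system and says nothing directly about a skew system whose alignment rule is modulated by a random phase; one would have to rule out, e.g., two invariant measures with the content majority locked in phase versus in anti-phase with the phase process, and the proposal does not address this.

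For comparison, the paper obtains Theorem~\ref{thm:counter} without enlarging the alphabet or introducing auxiliary dynamics: it keeps the BHS function $g$ on $A=\{0,1\}\times\{-1,1\}$ and conjugates by the second involution $\F((x_i,y_i))=((-1)^i x_i,y_i)$, setting $\tl g=g\circ\F$. The commutation relations~\eqref{gM}--\eqref{FTcomm} make $\nu\mapsto\nu\circ\F^{-1}$ a bijection of $\CK_g$ onto $\CK_{\tl g}$ (Lemma~\ref{lem:commute}), and the appendix's statement that $\mu_\pm$ are the \emph{only} extremal $g$-chains gives $\CK_{\tl g}=\operatorname{conv}\{\tl\nu_+,\tl\nu_-\}$ with $\tl\nu_+\circ\T^{-1}=\tl\nu_-$; hence the endpoints are non-stationary chains while the unique Doeblin measure is the midpoint, which is~\eqref{goal}. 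Note also that what you call the ``main obstacle'' --- reconciling $\tilde g>0$ with an a.s.\ period-$2$ factor --- is resolved in the paper by a purely tail-measurable object (the asymptotic alternating-sign majority), and your claim that tail-asymptotic candidates ``fail to flip under the shift since tails are shift-invariant'' is mistaken: the tail $\sigma$-algebra is shift-invariant, but individual tail sets need not be, and here $\T^{-1}$ exchanges the (mutually singular) supports of $\tl\nu_+$ and $\tl\nu_-$.
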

Since $\T$ is the full shift on $X$, we know from Walters~\cite{walters2}, p.\ 334, p.\ 340)
 that we have a unique measure in $\CK_g$ if and only if we have uniform (or
  equivalently: pointwise) convergence of the sequence $\{\L^n f(x)\}$. Hence we have the following corollary.
\begin{corollary}\label{thm:noptwise}
  There exists a Doeblin function $g$ with a unique Doeblin measure such that
  we do not have pointwise convergence of the sequence of iterates of the associated transfer operator $\L_g$.
\end{corollary}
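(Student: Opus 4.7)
The corollary is immediate from Theorem~\ref{thm:counter} and the Walters criterion quoted in the paragraph preceding its statement: if $\CK_g\supsetneq\CM_g=\{\mu\}$ then $|\CK_g|\ge 2$, so $\L_g^n f$ cannot converge pointwise to $\int f\,d\mu$ for every $f\in\C(\X)$. The substantive task is therefore the proof of Theorem~\ref{thm:counter}, which I would approach by coupling the Doeblin function $\hat g$ of~\cite{berger} on $\hat X=\{-1,1\}^\NN$ with an auxiliary ``parity'' coordinate on an enlarged alphabet. The key input is the appendix's fact that $\hat g$ has \emph{exactly} two extremal Doeblin measures $\mu_+$ and $\mu_-$, interchanged by the spin-flip $\sigma(x)=-x$: the plan is to keep $\mu_+$ and $\mu_-$ alive as distinct non-stationary $\tilde g$-chains while forcing any $T$-invariant chain to be the average of the two.

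I would work on the alphabet $A=\{-1,1\}\times\{0,1\}$, writing a state as $x=((a_i,b_i))_{i\ge 0}$, and design $\tilde g>0$ continuous via a Bramson--Kalikow-style long-range soft enforcement so that (i) under any $\tilde g$-chain of infinite age the $b$-coordinate sequence concentrates almost surely on one of the two alternating parity patterns $010101\ldots$ or $101010\ldots$, indexed by the parity of $b_0$, and (ii) conditional on $b_0$-parity $=0$ the $a$-transitions follow $\hat g$, while conditional on $b_0$-parity $=1$ they follow $\hat g\circ\sigma$. Since $T$ reverses the $b$-parity it interchanges these two modes and, on the $a$-side, swaps the roles of $\mu_+$ and $\mu_-$.

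This setup produces two distinguished non-stationary $\tilde g$-chains: let $\nu_+$ be supported on parity $0$ with $a$-marginal $\mu_+$, and $\nu_-:=\nu_+\circ T^{-1}$ supported on parity $1$ with $a$-marginal $\mu_-$. Both $\nu_\pm\in\CK_{\tilde g}$ by construction, yet neither is $T$-invariant, so $\CK_{\tilde g}\supsetneq\CM_{\tilde g}$. For uniqueness in $\CM_{\tilde g}$, decompose an arbitrary extremal Doeblin measure over the almost-sure value of the $b$-parity: on each parity class the $a$-conditional is a $\hat g$- or $\hat g\circ\sigma$-Doeblin measure, which by the appendix must equal $\mu_+$ or $\mu_-$; shift-invariance then forces a unique matching between parity and spin-phase, leaving only $\mu=\tfrac12(\nu_++\nu_-)$ as the stationary $\tilde g$-chain. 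Since $\mu$ has a measurable $\ZZ/2$-valued factor (the $b_0$-parity), it is in particular not weakly mixing, giving Theorem~\ref{thm:main2} as a bonus.

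The principal obstacle is the construction of $\tilde g$ itself: (i) and (ii) require phase-locking and slaving mechanisms sufficiently strong to pin down the extremal g-chains as claimed, while the strict positivity and continuity of $\tilde g$ must be preserved throughout. This is the same kind of combinatorial engineering as in~\cite{bramson,berger}, but it is the precision supplied by the appendix --- \emph{exactly} two extremal Doeblin measures for $\hat g$ --- that closes the uniqueness argument and rules out any exotic additional stationary measures that would otherwise sneak through a softer version of the construction.
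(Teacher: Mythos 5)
Your deduction of the corollary itself is exactly the paper's: granting Theorem~\ref{thm:counter}, the quoted Walters criterion (uniqueness of the measure in $\CK_g$ is equivalent to pointwise, equivalently uniform, convergence of $\L_g^n f$ for continuous $f$) immediately rules out pointwise convergence once $\CK_{\tl g}$ contains more than one chain while $\CM_{\tl g}$ is a singleton. If Theorem~\ref{thm:counter} is taken as an available input, nothing more is needed and your first paragraph is the whole proof.

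Where you diverge --- and where the genuine gap lies --- is in your plan for Theorem~\ref{thm:counter}. You propose to engineer a brand-new Doeblin function $\tilde g$ in which the $\{0,1\}$-coordinate is softly forced to concentrate on one of the two alternating parity patterns, with the spin transitions slaved to $\hat g$ or $\hat g\circ\sigma$ according to the parity. As you yourself concede, this construction is left unbuilt: producing a strictly positive continuous $\tilde g$ whose chains of infinite age concentrate almost surely on exactly two parity phases is itself a Bramson--Kalikow/BHS-level non-uniqueness construction, and on top of it you would need an ``exactly two extremal chains'' classification for your hybrid $\tilde g$; the appendix supplies that only for the BHS function $g$, so it cannot be quoted for a different function. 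The paper avoids all of this engineering: it keeps the BHS function and merely twists it by the involution $\F\big((x_i,y_i)_i\big)=\big((-1)^i x_i,\,y_i\big)_i$, setting $\tl g = g\circ\F$. The commutation relations $g\circ\M=g$, $\M\T=\T\M$, $\F\T=\M\T\F$ make $\nu\mapsto\nu\circ\F^{-1}$ a bijection from $\CK_g$ onto $\CK_{\tl g}$ (Lemma~\ref{lem:commute}), so the appendix's ``exactly two'' statement is transported verbatim: $\CK_{\tl g}=\operatorname{conv}\{\tl\nu_+,\tl\nu_-\}$, and since $\T$ interchanges $\tl\nu_+$ and $\tl\nu_-$, the midpoint is the unique Doeblin measure. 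No new concentration, phase-locking, or classification argument is required. To make your proof complete, either carry out in full the construction and the exactly-two classification you sketch (a substantial project in its own right), or replace that part by the involution twist of the existing example.
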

Our result also implies that there exists a system with a unique Doeblin measure
which is not Bernoulli, but that was already proved by Kalikow~\cite{kalikow}.

\subsection{The proof of Theorem~\ref{thm:counter}}

Recall that an involution on $\X$ is a bijection $\F:\X\to\X$ such that
$\F\not=\I$ and $\F^2=\I$, where $\I$ is the identity map. If a Doeblin function
$g(x)$ is invariant under an involution $\M$ commuting with $\T$ then both
$\CM_g$ and $\CK_g$ are invariant under the direct image $\M_*:\CM(\X)\to\CM(\X)$,
$\nu\mapsto \nu\circ\M^{-1}$. We look for a triple $(\X,\T,g)$ with two distinct ergodic
measures $\nu_+$ and $\nu_-$ conjugated by an involution symmetry $\M$. That is, we
have
\begin{equation}\label{gM}
  g(\M x) = g(\x), \quad \M\T=\T\M, \quad\M^2=\I\quad\text{and}\quad \nu_-=\nu_+\circ \M^{-1}.
\end{equation}
We also assume that there is a second involution $\F:\X\to\X$, $\F\not=\M$, with
the commutation rules
\begin{equation}\label{FTcomm}
  \M\F = \F\M \quad \text{and}\quad\F\T=\M\T\F.
\end{equation}
In addition, we also assume that $\nu_+$ and $\nu_-$ are the \emph{only} extremal
$g$-chains, so that $\CM_g$ and $\CK_g$ are the convex hull
\begin{equation}\label{sole}
  \CK_g=\CM_g=\operatorname{conv}\{ \nu_+,\nu_-\}.
\end{equation}

A candidate for a system that satisfies~\eqref{gM},~\eqref{FTcomm}
and~\eqref{sole}, would be the Doeblin function constructed by Bramson and
Kalikow~\cite{bramson} on $\X={\{+1,-1\}}^\NN$ with $\T$ the usual shift. With
$\M[(x_i)]=(-x_i)$ and $\F[(x_i)] = \left({(-1)}^i x_i\right)$ it is easy to
verify~\eqref{gM} and~\eqref{FTcomm}. However, the condition~\eqref{sole} seems
hard to prove. Instead, we use below a system on four symbols constructed
in~\cite{berger}, originally for the purpose of providing a system with
multiple Doeblin measures, but with $\ell^{2+\epsilon}$-summable variations of $g$.

\begin{lemma}\label{lem:commute}
  Assume a system $(\X,\T,g)$ satisfies~\eqref{gM},~\eqref{FTcomm}
  and~\eqref{sole}. Let $\tl g(x) = g(\F x)$ and $\tl\nu_{\pm} = \nu_{\pm}\circ\F^{-1}$.
  Then, for the system $(\X,\T,\tl g)$, we have
  \[
    \CM_{\tilde g} = \{ \frac12(\tl\nu_+ + \tl\nu_-)\} \quad \text{ and } \quad \CK_{\tl g} = \operatorname{conv}\{\tl\nu_+,\tl\nu_-\}.
  \]
\end{lemma}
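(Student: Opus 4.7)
The plan is to exploit the two involution symmetries to transport the description \eqref{sole} of $\CK_g$ through the conjugation $F_*$, and then to extract $\CM_{\tl g}$ as the $\T$-invariant part of the image. The heart of the argument is an intertwining identity for the dual transfer operators. Starting from
\[
  \L_{\tl g} f(x) = \sum_{y : \T y = x} g(Fy)\,f(y),
\]
I would change variables $y = Fz$ and apply the twisted commutation $\T F = F M \T$ (which follows from \eqref{FTcomm} together with $F^2 = \I$) to obtain $\L_{\tl g} f = (\L_g(f \circ F)) \circ (FM)$ and then, by duality,
\[
  \Ls_{\tl g}\mu = F_*\,\Ls_g\,(FM)_*\mu.
\]
The $M$-invariance of $g$ combined with $M\T = \T M$ also yields $\Ls_g\, M_* = M_*\,\Ls_g$.

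Applying this intertwining to $\tl\nu_+ = F_*\nu_+$, and using $FMF = M$ (from $MF=FM$ and $F^2=\I$), I compute
\[
  \Ls_{\tl g}\tl\nu_+ = F_*\,\Ls_g\,(FMF)_*\nu_+ = F_*\,M_*\,\Ls_g\nu_+ = F_*\,M_*\nu_+ = F_*\nu_- = \tl\nu_-,
\]
using $M_*\nu_+ = \nu_-$ from \eqref{gM} and $\Ls_g\nu_+ = \nu_+$ (the Doeblin property). Symmetrically $\Ls_{\tl g}\tl\nu_- = \tl\nu_+$. A parallel computation from $(\T F)_* = (F M\T)_*$ and the $\T$-invariance of $\nu_\pm$ gives $\T_*\tl\nu_+ = \tl\nu_-$ and $\T_*\tl\nu_- = \tl\nu_+$. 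Thus $\Ls_{\tl g}$ and $\T_*$ induce the same transposition $\sigma$ on the pair $\{\tl\nu_+,\tl\nu_-\}$.

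Consequently $(\Ls_{\tl g})^n\,\T^n_*\tl\nu_\pm = \sigma^{2n}\tl\nu_\pm = \tl\nu_\pm$ for every $n \ge 1$, which is exactly the defining identity $\tl\nu = \Ls^n(\tl\nu\circ\T^{-n})$ for membership in $\CK_{\tl g}\ii{n}$. Hence $\tl\nu_+,\tl\nu_- \in \CK_{\tl g}$, and so is every convex combination. For the reverse inclusion, since $F$ is an involution the double tilde gives back the original, $\tl{\tl g}=g$, so the same construction run backwards exhibits $F_*$ as a bijection $\CK_{\tl g} \to \CK_g$; combined with \eqref{sole} this yields $\CK_{\tl g} = F_*\,\operatorname{conv}\{\nu_+,\nu_-\} = \operatorname{conv}\{\tl\nu_+,\tl\nu_-\}$. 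Finally $\CM_{\tl g} = \CK_{\tl g}\cap\CM_\T$; since $\T_*(\alpha\tl\nu_+ + (1-\alpha)\tl\nu_-) = \alpha\tl\nu_- + (1-\alpha)\tl\nu_+$, translation invariance forces $\alpha = 1/2$ (using $\tl\nu_+\ne\tl\nu_-$, which follows from $\nu_+\ne\nu_-$ and the injectivity of $F_*$), yielding $\CM_{\tl g} = \{\tfrac12(\tl\nu_+ + \tl\nu_-)\}$.

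The main obstacle I anticipate is the operator bookkeeping in the intertwining identity: it is essential to apply the twisted commutation $\T F = F M\T$ in the correct direction and to keep the pushforward-versus-pullback conventions consistent when passing from $\L_{\tl g}$ to $\Ls_{\tl g}$; once those identities are in place, the remainder is algebraic manipulation followed by a short convex-combination argument.
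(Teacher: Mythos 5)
Your intertwining identity, the computations showing that $\Ls_{\tl g}$ and $\T_*$ both transpose $\tl\nu_+$ and $\tl\nu_-$, and the final convex-combination step identifying $\CM_{\tl g}$ with the midpoint are all correct. The gap is in the reverse inclusion $\CK_{\tl g}\subseteq\operatorname{conv}\{\tl\nu_+,\tl\nu_-\}$, which is the only place where the hypothesis~\eqref{sole} can actually be brought to bear, and which you dispose of with the phrase ``the same construction run backwards exhibits $\F_*$ as a bijection $\CK_{\tl g}\to\CK_g$''. But the construction you ran forwards is not a general mapping statement: it proves membership in $\CK_{\tl g}$ only for the two special measures $\tl\nu_\pm$ (and their convex combinations), and it leans on $\Ls_g\nu_\pm=\nu_\pm$ and $\T_*\nu_\pm=\nu_\pm$, i.e.\ on stationarity, which an arbitrary element of $\CK_{\tl g}$ need not enjoy; nor do you have an analogue of~\eqref{sole} for $\tl g$ to fall back on, since that is exactly what is being proved. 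So, as written, there is no argument showing that an arbitrary $\rho\in\CK_{\tl g}$ is pushed by $\F_*$ into $\CK_g$, and without that you cannot invoke~\eqref{sole} to cap $\CK_{\tl g}$.

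The fix is available with the tool you already built, but it requires iterating the intertwining identity against $\T_*$ for \emph{general} measures. From $\Ls_{\tl g}=\F_*\Ls_g(\F\M)_*$, together with $(\F\M)_*\F_*=\M_*$, $\Ls_g\M_*=\M_*\Ls_g$ (which you noted) and the iterated commutation $\T^n\F=\M^n\F\T^n$, one gets, since $\M$ is an involution so that all accumulated powers of $\M_*$ cancel,
\[
  \Ls_{\tl g}^{\,n}\bigl((\F_*\mu)\circ\T^{-n}\bigr)\;=\;\F_*\bigl(\Ls_g^{\,n}(\mu\circ\T^{-n})\bigr)
  \qquad\text{for every }n\ge1\text{ and every }\mu\in\CM(\X).
\]
This single identity shows $\mu\in\CK_g\ii n$ if and only if $\F_*\mu\in\CK_{\tl g}\ii n$, hence $\F_*$ is a bijection of $\CK_g$ onto $\CK_{\tl g}$, and only now does~\eqref{sole} give $\CK_{\tl g}=\F_*\operatorname{conv}\{\nu_+,\nu_-\}=\operatorname{conv}\{\tl\nu_+,\tl\nu_-\}$; the rest of your argument then goes through. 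For comparison, the paper obtains exactly this bijection by verifying the conditional-probability form of the chain condition,~\eqref{gchain}: for an \emph{arbitrary} $\mu\in\CK_g$ it computes $\tl\mu(x\mid\T^nx)=\tl g(x)\tl g(\T x)\cdots\tl g(\T^{n-1}x)$ using $\T^j\F=\M^j\F\T^j$ and $g\circ\M=g$, which handles all $n$ and all $\mu$ in one stroke and is the shorter route; your operator-dual version works equally well once the displayed identity is actually proved rather than asserted.
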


\begin{proof}
  We claim that the map $\nu \mapsto \tl\nu = \nu\circ\F^{-1}$ takes $\CK_g$ bijectively onto
  $\CK_{\tl g}$. To see this, let $\mu\in\CK_g$ be a $g$-chain and note that
  \[
    \tl \mu (\x\mid \T^k \x) = \mu(\F x | \T^{k} \F x)
  \]
  Then it follows that
  \begin{align*}
    \tl\mu(x\mid \T^n x) &= g(\F x) \cdot g(\T\F x) \cdot g(\T^2 \F x) \cdots g(\T^{n-1}\F x) \\
                    &= g(\F x) \cdot g(\M\F\T x) \cdot g(\M^2\F\T^2 x) \cdots g(\M^{n-1}\F\T^{n-1} x) \\
                    &= \tl g(x) \cdot \tl g(\T x) \cdot \tl g(\T^2 x) \cdots \tl g(\T^{n-1} x).
  \end{align*}
  Since this holds for all $k\ge0$, it shows that $\tl \mu$ is a $\tl g$-chain. The
  first equality is due to $\mu$ being a $g$-chain. The second equality follows
  from the rule~\eqref{FTcomm}. The final equality is due to $g(\M^k x)=g(x)$ by
  the symmetry and that $g(\F x)=\tl g(x)$ by definition. Since $\F$ is an
  involution, it is clear that the argument gives a bijection between $\CK_g$ and
  $\CK_{\tl g}$.

  None of the measures $\tl\nu_+$ and $\tl\nu_-$ are translation invariant since
  \[
    \tl\nu_+ \circ \T^{-1} = \nu_+\circ\F\circ\T^{-1} = \nu_{+} \circ \M \circ \F = \nu_-\circ\F = \tl \nu_-.
  \]
  Thus chains with distribution $\tl\nu_{\pm}$ are periodic in distribution with
  period 2. It is then clear that the only candidate for a stationary
  $\tl g$-chain in $\CK_{\tl g} = \operatorname{conv} \{\tl\nu_+,\tl\nu_-\}$ is
  the midpoint $\tl\mu=(\tl\nu_++\tl\nu_-)/2$ between them.
\end{proof}

To prove Theorem~\ref{thm:counter}, we use the following counterexample to
uniqueness of a Doeblin measure based on the example provided in~\cite{berger}.
We consider an alphabet $A =\{0,1\} \times \{-1,1\}$ and the input to the
Doeblin function $g$ as two sequences: $(x_{n})_{n\geq 0}$ with values in
$\{-1,1\}$ and $(y_{n})_{n\geq 0}$ with values in $\{0,1\}$.

We construct a Doeblin function $g$ as a mechanism for sampling $x_0$ and $y_0$
given $(x_{n})_{n\ge1}$ and $(y_{n})_{n\ge1}$, using the following scheme:
\begin{enumerate}[(1)]
  \item Sample $y_0$ as a $1/2$-Bernoulli variable independently of
        $(x_{n},y_n)_{n\ge1}$.

  \item Using $(y_{n})_{n\ge1}$ choose a finite set $S \subseteq \{k: k\ge1\}$.
        Either $|S|$ is odd or $S=\emptyset$.

  \item Using $(y_{n})_{n\ge1}$ choose a number $\xi$ between $.5$ and $.75$.

  \item If $S\neq \emptyset$, then take $x_0=\sgn \sum_{k\in S} x_k$ with probability $\xi$ and
        $x_0=-\sgn \sum_{k\in S} x_k$ with probability $1-\xi$. If $S=\emptyset$, choose
        $x_0=1$ with probability $1/2$ and $x_0=-1$ with probability $1/2$.
\end{enumerate}

The following theorem is the main result of~\cite{berger} in combination with the appendix of the present paper, where we show, for the sake of clarification, that there are precisely two ergodic measures in this example.
\begin{theorem}[\cite{berger}]\label{thm:bhs}
  There exists a way to choose the set $S$ and the parameter $\xi$ such that the
  Doeblin function $g$ has exactly two ergodic (extremal) Doeblin measures $\mu_+$
  and $\mu_-$.

  Moreover, $\mu_+$ and $\mu_-$ are also the only extremal Doeblin chains for $g$.
\end{theorem}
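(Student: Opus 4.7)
The existence of at least two ergodic Doeblin measures $\mu_+,\mu_-$ is the main result of \cite{berger}, so the task here is to prove the two additional claims: (i) there are no further extremal Doeblin measures, and (ii) $\mu_\pm$ exhaust the extremal $g$-chains. The plan is to exploit the involution $\M:(x,y)\mapsto(-x,y)$, which satisfies $\M\T=\T\M$, $\M^2=\I$ and $g\circ\M=g$; pushforward under $\M$ is therefore an involution on $\CM_g$ and on $\CK_g$ that fixes the set of extremal points and will end up exchanging $\mu_+$ and $\mu_-$.

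The first step will be to observe that, by step~(1) of the construction, $y_0$ is sampled independently of the past as a fair coin. Consequently, under every $g$-chain the $y$-marginal is IID Bernoulli$(1/2)$, and one may work in the quenched picture by conditioning on the entire $y$-trajectory. Conditionally on $y$, the $x$-dynamics is a time-inhomogeneous majority-vote scheme over the sets $S_n = S(\T^{n}y)$ with confidence $\xi_n = \xi(\T^{n} y) \in (0.5, 0.75)$. The choices in \cite{berger} are such that this quenched rule amplifies any nonzero bias into an asymptotic magnetization $\pm m^\ast$ for some $m^\ast \in (0,1)$, while leaving the zero-magnetization state as an unstable fixed point.

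Using this, the next step is to show that for every extremal Doeblin measure $\nu$ the asymptotic average $m_\nu := \lim_n n^{-1}\sum_{k=1}^n x_k$ exists $\nu$-almost surely and is a deterministic constant. The content to be provided in the appendix is then the following quenched uniqueness claim: for $y$-almost every realization, any quenched invariant law for $x$ has conditional magnetization either $+m^\ast$ or $-m^\ast$. Combined with the $\M$-symmetry this forces $\nu \in \{\mu_+, \mu_-\}$, establishing (i). For (ii), any $\nu \in \CK_g$ still admits a well-defined $m_\nu$ via ``future'' Birkhoff averages along the chain, and the same dichotomy forces a decomposition $\nu = q\mu_+ + (1-q)\mu_-$; extremality then gives $q \in \{0,1\}$.

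The hard part will be the quenched uniqueness of the invariant magnetization modulo sign: one must rule out both a disordered ergodic component with $m_\nu = 0$ and any intermediate values $0 < |m_\nu| < m^\ast$. Because $|S_n|$ grows unboundedly in the construction of \cite{berger}, the voting rule has long-range memory and no straightforward contraction argument applies. The natural route is to refine the bootstrap/amplification estimates of \cite{berger}: show that the quenched recursion for $\E(x_0 \mid y, \mathrm{past})$ has exactly three fixed points---$\pm m^\ast$ stable and $0$ unstable---and that the instability at $0$ propagates for $y$-almost every trajectory, so that no third $\M$-symmetric ergodic phase can coexist with $\mu_\pm$.
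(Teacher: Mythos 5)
Your plan establishes (at best) that every extremal Doeblin measure or chain has asymptotic magnetization in $\{+m^\ast,-m^\ast\}$, but it never addresses the step that actually carries the whole theorem: why there is \emph{only one} measure in each phase, i.e.\ why two Doeblin chains that both select the ``$+$'' phase must coincide. The $\M$-symmetry only exchanges the two phases; it cannot rule out two distinct ergodic measures both having magnetization $+m^\ast$. The appendix of the paper does exactly this missing step, and by a quite different mechanism: using Lemma~7 of~\cite{berger}, the block signature $X=\lim_k \sign\sum_{t\in C_k}x_t$ exists a.s.\ for every Doeblin chain and is $\pm1$ (openings have odd size), so one may assume $\mu_1(X=1)=\mu_2(X=1)=1$; then one couples the $y$-sequences identically (their law is the same IID Bernoulli$(1/2)$ under every chain), picks $k_0$ with $X_{k_0}=1$ under both with high probability and with the $k_0$-block opening meeting $[-N,N]$, samples the $x$'s to the left of $j_0=\max(O_{k_0})$ arbitrarily, and then samples sequentially with maximal coupling. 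The structural fact that makes this work (Lemma~\ref{lem:inint}) is that for $j>j_0$ the majority set $S_j$ is either exactly $C_{k_0}$ or contained in $[j_0+1,\infty)$, so once the two chains agree on the signature of the $k_0$-block they agree \emph{exactly} for all $j>j_0$; this gives total-variation closeness on $[-N,N]$ and hence $\mu_1=\mu_2$. Nothing like this exact-agreement argument appears in your proposal.

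Instead, you route everything through an unproven ``quenched uniqueness'' claim: that the quenched recursion for the conditional magnetization has precisely the fixed points $0,\pm m^\ast$, that $0$ is unstable, and that this instability propagates for a.e.\ $y$. You yourself flag that no contraction argument applies because $|S_n|$ is unbounded, and you offer only the hope of ``refining the bootstrap/amplification estimates of~\cite{berger}''; that is the entire difficulty deferred, not a proof. Moreover, classifying phases by the Birkhoff density average $n^{-1}\sum_k x_k$ is weaker than classifying by the block signature: a priori you would also have to exclude ergodic components with intermediate or zero magnetization, a problem the paper's argument never faces because the signature is forced to be $\pm1$ by the odd size of the openings. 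So the proposal has two genuine gaps: the unproved quenched fixed-point dichotomy, and, even granting it, the absence of any argument that a common phase label forces equality of the measures.
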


Now, we define the action of $\M$ and $\F$ as in the Bramson-Kalikow example
mentioned above, but restricted to the $x$-sequence. That is, let
$\M((x_i,y_i)_{i=1}^\infty) = (-x_i,y_i)_{i=0}^\infty$ and
$\F((x_i,y_i)_{i=1}^\infty) = ((-1)^i x_i, y_i)_{i=1}^\infty$. It is then straightforward
to verify that the commutation rules~\eqref{gM} and~\eqref{FTcomm} hold. From
Theorem~\ref{thm:bhs} we also demonstrate~\eqref{sole}. An application of
Lemma~\ref{lem:commute} thus concludes the proof. \qed

\begin{remark}
  It is still an open question whether the two measures constructed by Bramson
  and Kalikow~\cite{bramson} are the only extremal measures in $K_g$. In the
  affirmative case, their construction can also be adapted to provide a similar
  counterexample.
\end{remark}

\subsection{Proof of Theorem~\ref{thm:main2}}

We conclude by giving the proof of Theorem~\ref{thm:main2} since it uses the
construction in the proof of Theorem~\ref{thm:counter}. We claim that the
translation invariant measure $\tl \mu=(\tl \nu_+ + \tl \nu_-)/2$ constructed above is
not weakly mixing. Let $A$ be a set on which $\tl\nu_+ $ is concentrated and such
that $\tl \nu_-(A)=0$ ($\tl \nu_+$ and $\tl \nu_-$ are mutually singular). We obtain
an oscillating sequence
\[
  \mu((\T^{-n} A) \cap A) = \nu_+(\T^{-n} A) =
  \begin{cases}
     1/2 & \text{$n$ even} \\
     0 & \text{$n$ odd.}
   \end{cases}
\]
Thus the terms in the Ces\`aro mean
\[\frac{1}{n} \sum_{k=1}^n |\mu(\T^{-n} A \cap A)- \mu(A)\mu(A)| \]
are all equal to $1/4$ and hence the limit is $1/4$ and not zero, disproving
weak mixing.

\begin{remark}
  Note that Theorem~\ref{thm:main2} does not directly follow from
  Theorem~\ref{thm:counter} and Corollary~\ref{thm:noptwise}, since strong
  mixing implies weak mixing. The existence of a non weakly mixing ergodic
  measure will hold whenever the map $\nu\to\nu\circ\T^{-1}$ have periodic points in
  $\CK_g\setminus\CM_g$.
\end{remark}

\begin{remark}
  In our example, the Doeblin measure is ergodic, but not totally ergodic. We
  may ask whether a unique totally ergodic Doeblin measure has to be weakly
  mixing.
\end{remark}

\appendix
\newcommand{\sign}{\operatorname{sign}}
\section{Exactly two extremal measures in Theorem \ref{thm:bhs} }
In this appendix we prove a fact that should have been proven in~\cite{berger}.
Both the fact and its proof were known to the authors of~\cite{berger} when they
wrote their paper. All throughout this appendix we use the terminology and the
notation of~\cite{berger}, and we assume knowledge of that paper.

Let $A=\{-1,+1\}^2$, and let $g$ be the Doeblin function ('specification' in the
terminology of~\cite{berger}) defined in~\cite{berger}. In order to be
consistent with the notations in~\cite{berger}, we think of $g$ as a function of
$ (x_{-j}, y_{-j} : j\in \mathbb N_0). $

The goal of this appendix is to prove that indeed there are exactly two Doeblin
measures. In~\cite{berger} it was proven that there are at least two, so here we
only need to prove that there are no more than two.

\begin{proof}[Proof that there are exactly two extremal Doeblin measures in
  Theorem \ref{thm:bhs}]
  Let $K$ be as in section 2 of~\cite{berger}, and for every $k \geq K$, let $O_k$
  be the beginning and let $C_k$ be the opening of the $k$ block containing $0$
  (see Definitions 3 and 4 in~\cite{berger}, p.\ 1345 in the journal version).
  Let
  \[
    X_k = \sign\sum_{t\in C_k} x_t
  \]
  be the signaure of the block. By (the proof of) Lemma 7 of~\cite{berger} (p.\
  1350), if $\mu$ is a Doeblin chain for $g$, then
  \[
    X = \lim_{k \to \infty} X_k
  \]
  exists $\mu$-almost surely. Further, if $\mu$ is extremal, then the limit is an
  almost sure constant. By the definition of $X(k)$ and the fact that the
  openings are of odd sizes, we get that $X$ has to be $+1$ or $-1$.

  The proof will be complete once we have shown that if $X$ has the same
  distribution under two Doeblin measures $\mu_1$ and $\mu_2$ then $\mu_1 = \mu_2$.
  Without loss of generality we assume that
  \begin{equation}\label{eq:mumu}
    \mu_1(X=1) = \mu_2(X=1) = 1.
  \end{equation}

  In order to prove~\eqref{eq:mumu}, for every $\epsilon$ and $N$ we find a coupling
  $P$ between $\mu_1$ and $\mu_2$ such that
  \[
    P\big(\forall_{-N < j < N} \; x^{(1)}_j = x^{(2)}_j \mbox{ and
    } y^{(1)}_j = y^{(2)}_j \big) > 1 - \epsilon.
  \]

  Since the sequence $(y_j)$ has the same marginal distribution in $\mu_1$ and in
  $\mu_2$ (and in fact in every Doeblin chain of $g$), we first sample
  $\big(y^{(1)}_j\big)_{j \in \mathbb Z}$ and
  $\big(y^{(2)}_j\big)_{j \in \mathbb Z}$ in a way that $y^{(1)}_j = y^{(2)}_j$
  for all $j$. In particular, this implies that both sequences have exactly the
  same block structure.

  For $k \geq K$ let
  \[
    A(k) = A_N(k) = \{O_k \neq \emptyset \} \cap \{O_k \cap [-N,N] \neq\emptyset\}.
  \]

  Now let $k_0$ be so large that
  \begin{equation}\label{eq:pgood}
    \mu_1 \big( X_{k_0} \neq 1 \big) + \mu_2 \big( X_{k_0} \neq 1 \big) + \mu \big( A_N(k_0)^c \big) < \epsilon.
  \end{equation}
  where for the events related to $(y_j)$ we simply write $\mu$ because, as
  already mentioned, all Doeblin measures agree on the marginal distribution of
  the sequence $(y_j)$.

  Condition on the event $A_N(k_0)$. On the complement event our coupling fails,
  but the probability of $A_N(k_0)^c$ is small enough.

  Let $j_0 = \max(O_{k_0})$. We now sample $\big( x^{(1)}_{j} \big)_{j < j_0}$
  according to $\mu_1$ and $\big( x^{(2)}_{j} \big)_{j < j_0}$ according to $\mu_2$, say independently.
  Note that
  $ P\left(A(k_0)^c \cup \big\{X^{(1)}_{k_0}\neq X^{(1)}_{k_0}\big\} \right) < \epsilon, $ so
  in what follows we condition on the complement event
  $ A(k_0) \cap \big\{X^{(1)}_{k_0} = X^{(1)}_{k_0}\big\}. $ We now continue to
  sample $\big( x^{(1)}_j, x^{(2)}_j \big)_{j= j_0 + 1}^\infty$ sequentially as
  follows: Once we have $\big( x^{(1)}_i, x^{(2)}_i \big)_{i < j}$, we sample
  $x^{(1)}_j$ and $x^{(2)}_j$ according to the probabilities given by $g$,
  coupled in a way that maximizes $P\big( x^{(1)}_j = x^{(2)}_j \big)$.

  Clearly $P$ is a coupling of $\mu_1$ and $\mu_2$, and we need the following fact.


  \begin{lemma}\label{lem:inint}
    \[
      P\left[\left. \forall_{j \in [-N,N]} \ x^{(1)}_j = x^{(2)}_j \ \right| \ A(k_0) \cap \big\{X^{(1)}_{k_0} = X^{(1)}_{k_0}\big\} \right] = 1.
    \]
  \end{lemma}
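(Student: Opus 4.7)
The plan is to show, by induction on positions $j\ge j_0+1$, that the maximal coupling succeeds with probability one at every step in $[-N,N]$, so that $x^{(1)}_j=x^{(2)}_j$ for all $j\in[-N,N]$ with $j>j_0$; the positions in $[-N,N]\cap\{j\le j_0\}$ are handled by observing that the conditioning on $X^{(1)}_{k_0}=X^{(2)}_{k_0}$ pins down the two independent samples on the opening $C_{k_0}$ in a compatible way, namely that the joint marginals of $\mu_1$ and $\mu_2$ on $\{j<j_0\}$, conditioned on the $y$-sequence and on $X_{k_0}$, coincide (because $\mu_1$ and $\mu_2$ differ only through the block signatures, and the only relevant signature on this region is $X_{k_0}$, which has been matched by the conditioning).

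The key input is the following locality property of the Doeblin function $g$ built in~\cite{berger}: for every position $j$ lying in the body $B_{k_0}\setminus O_{k_0}$ of the $k_0$-block, the conditional distribution $g(\,\cdot\mid (x_i)_{i<j})$ depends on the past only through (i) the $y$-sequence; (ii) the block signature $X_{k_0}=\sign\sum_{i\in C_{k_0}}x_i$; and (iii) the values $x_i$ at indices $i$ in an already-coupled ``local'' region with $i>j_0$. This locality is precisely the mechanism by which~\cite{berger} produces two extremal Doeblin chains distinguished solely by the sign of the limiting block signature.

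With this locality in hand, the induction is routine. At the base case $j=j_0+1$ the conditional distribution uses only data of type (i) and (ii), which match between the two chains by construction of the coupling of the $y$-sequences and by our conditioning $X^{(1)}_{k_0}=X^{(2)}_{k_0}$; the two conditional distributions are therefore identical, and the maximal coupling forces $x^{(1)}_{j_0+1}=x^{(2)}_{j_0+1}$ almost surely. For $j>j_0+1$, the same argument applies with data of type (iii) controlled by the inductive hypothesis. A union bound over the finite set $[-N,N]$ preserves the probability-one conclusion.

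The main obstacle is to extract the locality property from the explicit construction of~\cite{berger}. Concretely, one must inspect, for every position $j$ in the body of $B_{k_0}$, the voting set $S$ chosen by the $y$-sequence at step $j$ and verify that $S$ is either equal to $C_{k_0}$ (so that only $X_{k_0}=\sign\sum_{i\in S}x_i$ enters the distribution) or contained in a region already handled by the inductive hypothesis. This is a careful but direct check against Definitions~3 and~4 of~\cite{berger} and the hierarchical way the blocks are nested; once it is established, the rest of the proof is the purely mechanical induction outlined above, and Lemma~\ref{lem:inint} follows.
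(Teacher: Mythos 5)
Your induction over $j\ge j_0+1$ coincides with the paper's own proof: for each such $j$ the voting set $S_j$ is either the opening $C_{k_0}$, whose majority sign agrees between the two chains because of the conditioning on $\{X^{(1)}_{k_0}=X^{(2)}_{k_0}\}$, or a subset of $[j_0+1,\infty)\cap\ZZ$, where the chains already agree by the inductive hypothesis; in either case the two conditional laws given the respective pasts are identical, so the greedy (maximal) coupling succeeds with probability one at that step. Like the paper, you leave the dichotomy for $S_j$ as a check against the block structure of~\cite{berger}, so on that point you are at the same level of detail as the printed argument.

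The genuine gap is your treatment of the coordinates $j\in[-N,N]$ with $j\le j_0$. As the lemma is used afterwards (to make the total variation of the restrictions of $\mu_1$ and $\mu_2$ to $[-N,N]$ small), the event $A_N(k_0)$ must guarantee that the beginning $O_{k_0}$ of the $k_0$-block containing $0$, hence $j_0=\max O_{k_0}$, lies to the left of $-N$ (the displayed definition of $A_N(k)$ appears to state the complement of the intended condition), so that the whole window $[-N,N]$ is contained in the sequentially coupled region $[j_0+1,\infty)$ and there are no such coordinates to handle. Your proposed argument for them would not work if they occurred: in the coupling actually constructed, $\big(x^{(1)}_j\big)_{j<j_0}$ and $\big(x^{(2)}_j\big)_{j<j_0}$ are sampled \emph{independently}, and independent samples from non-degenerate laws do not coincide almost surely even when the two laws are equal, so matching conditional marginals could never give the probability-one conclusion. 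Moreover, the asserted equality of the conditional laws of $\mu_1$ and $\mu_2$ on $\{j<j_0\}$ given the $y$-sequence and $X_{k_0}$ is unjustified and essentially circular: that the limiting signature determines the measure is precisely what the appendix is in the course of proving, and conditioning on $X^{(1)}_{k_0}=X^{(2)}_{k_0}$ fixes only the sign of the majority over $C_{k_0}$, not the sampled values themselves. Replace that paragraph by the observation that on $A_N(k_0)$ one has $[-N,N]\subseteq[j_0+1,\infty)$; the remaining induction is then exactly the paper's proof.
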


\begin{proof}
  For every $j$ write $S_j$ for the set whose majority we take in deciding
  $x^{(1)}_j$ and $x^{(2)}_j$. By induction, for every $j > j_0$, we have that
  either $S_j = C_{k_0}$ or $S_j \subseteq [j_0 + 1, \infty] \cap \mathbb Z$. Therefore, again
  by induction, on the event
  $A(k_0) \cap \big\{X^{(1)}_{k_0} = X^{(1)}_{k_0}\big\}$ we have that
  $x^{(1)}_j = x^{(2)}_j$ for all $j > j_0$.
\end{proof}

Using Lemma \ref{lem:inint} we see that the total variation between the
restrictions of $\mu_1$ and $\mu_2$ on $[-N,N]$ is arbitrarily small. As $N$ is
arbitrarily large, we get that $\mu_1 = \mu_2$.

\end{proof}

\end{document}